\documentclass[10pt,a4paper,draft]{article}

\setlength{\oddsidemargin}{0pt}
\setlength{\evensidemargin}{0pt}
\setlength{\textwidth}{475pt}
\setlength{\topmargin}{10pt}

\usepackage{amsmath,amssymb,amsthm,amscd}
\usepackage[]{fontenc}
\usepackage{xy}
\usepackage{enumerate}

\xyoption{all} \numberwithin{equation}{section}
\newtheorem{theorem}[equation]{Theorem}

\newtheorem{proposition}[equation]{Proposition}
\newtheorem{lemma}[equation]{Lemma}

\theoremstyle{definition}

\newtheorem{definition}[equation]{Definition}

\newtheorem{example}[equation]{Example}

\newtheorem{remark}[equation]{Remark}

\newcommand{\nc}{\newcommand}
\nc{\cH}{\mathcal{H}} \nc{\cA}{\mathcal{A}} \nc{\cG}{\mathcal{G}}
\nc{\cC}{\mathcal{C}}
\nc{\cO}{\mathcal{O}}
\nc{\cI}{\mathcal{I}}
\nc{\cB}{\mathcal{B}} \nc{\cY}{\mathcal{Y}} \nc{\cK}{\mathcal{K}}
\nc{\cX}{\mathcal{X}} \nc{\cS}{\mathcal{S}} \nc{\cE}{\mathcal{E}}
\nc{\cF}{\mathcal{F}} \nc{\cZ}{\mathcal{Z}} \nc{\cQ}{\mathcal{Q}}
\nc{\cN}{\mathcal{N}} \nc{\cP}{\mathcal{P}} \nc{\cL}{\mathcal{L}}
\nc{\cM}{\mathcal{M}} \nc{\cR}{\mathcal{R}} \nc{\cT}{\mathcal{T}}
\nc{\cW}{\mathcal{W}} \nc{\cU}{\mathcal{U}} \nc{\cD}{\mathcal{D}}
\nc{\cJ}{\mathcal{J}} \nc{\cV}{\mathcal{V}}
\nc{\fr}{{\rightarrow}}
\nc{\rd}{red.deg}



\title{Generalized Clifford-Severi Inequality  and \break the Volume of Irregular Varieties}
\author{Miguel A. Barja \footnote{Partially supported by MICINN-MTM2009-14163-C02-02/FEDER, MINECO-MTM2012-38122-C03-01 and by Generalitat de Catalunya 2005SGR00557.}}

\begin{document}

\maketitle

\begin{abstract}  We give a sharp lower bound for the self-intersection of a nef line bundle $L$ on an irregular variety $X$ in terms of its continuous global sections and the Albanese dimension of $X$, which we call the Generalized Clifford-Severi inequality. We also extend the result to nef vector bundles and give a slope inequality for fibred irregular varieties. As a byproduct we obtain a lower bound for the volume of irregular varieties; when $X$ is of maximal Albanese dimension the bound is ${\rm vol}(X) \geq 2 \, n! \, {\chi}(\omega _X)$ and it is sharp.
\end{abstract}

\bigskip
\bigskip
\bigskip

\section{Introduction and preliminaries}

The geometry of irregular varieties has been deeply developed in the last 30 years. The seminal results of Green and Lazarsfeld on Generic Vanishing theorems, the generalized Castelnuvo-de Franchis theorem by Catanese and Ran, the systematic use of the Fourier-Mukai techniques associated to the Albanese map and further developments by Lazarsfeld, Ein, Hacon, Chen, Pareschi, Popa and many others have provided a rather complete understanding of birational properties of such varieties.


Another fruitful approach to understanding the geometry of an irregular variety is the study of their {\it continuous} linear series. In the case of abelian varieties this approach goes back to Mumford and Kempf (see for example \cite{M1}, \cite{M2}). For the study of irregular surfaces, one of the first instances of a systematic use of them is done in  \cite{CCML}. Later on the work of Pareschi and Popa on continuous global generation and the recent work of Mendes-Lopes, Pardini and Pirola on Brill-Noether theory and continuous families of divisors gives a deep understanding of their geometry in higher dimensions (see \cite{MLPSurvey}, \cite{MLPP2}, and  \cite{MLPP1}).

In the study of biregular geometry of varieties, inequalities relating the degree and the number of global sections of a line bundle play a special role. This is the goal of our main theorem. In order to stating it, we need to introduce some notation. Consider an irregular variety $X$ and a non trivial map $a:X \longrightarrow A$ to an abelian variety. We say that $X$ is of {\it maximal $a$-dimension} if ${\rm dim} \, a(X)={\rm dim} \, X$. We associate to any nef line bundle $L$ on $X$ two invariants. One of them is $\delta(L)=\frac{2r(L)}{2r(L)-1}$ (see Definition \ref{delta1}), a real number between 2 an 1, where $r(L)$ is the degree of subcanonicity of $L$, the minimal $r$ such that $L\preceq rK_X$ (numerically). The second one is $h^0_a(L)$, the {\it continuous rank} of $L$, i.e., the rank of the Fourier-Mukai transform of $L$ with respect to the map $a$, i.e. the minimal value of $h^0(X,L\otimes a^*\alpha)$ for $\alpha \in {\widehat A}$. We also will consider $W$ and $M=L-W$, the {\it continuous fixed part} and the {\it continuous moving part} of $L$, respectively, where $W$ is the common base component of the linear systems $|L\otimes a^*\alpha|$, for $\alpha$ general (see Definition \ref{continuousmovingpart}). With this notation, we can state

\bigskip

\noindent {\large {\bf Main Theorem (Generalized Clifford-Severi Inequality)}}{\it
\smallskip

Let $X$ be a smooth, projective variety of dimension $n$, over an algebraically closed field of characteristic 0. Let $a: X \longrightarrow A$ be a nontrivial map to an Abelian variety and let $L \in {\rm Pic}(X)$ be a nef line bundle.
\begin{itemize}

\item [(i)] If $X$ is of maximal $a$-dimension then $$L^n \geq  \delta (L)\, n! \,h^0_a(L).$$
\noindent In particular, if $L \preceq K_X$, then $L^n \geq 2n! \, h^0_a(L).$
\item [(ii)] Assume $n> {\rm dim} \, a(X)=k\geq 1$ and let $M$ be the continuous moving part of $L$. If $M$ is $a$-big then
$$L^n \geq  \delta (L)\, k! \,h^0_a(L).$$
\item [(iii)] Assume that $n>{\rm dim} \, a(X)=k\geq 1$ and that $L$ is $a$-big. Then  $$L^n \geq  k! \,h^0_a(L).$$
\end{itemize}
}

\bigskip

We can see this theorem as a wide generalization of the classical Severi inequality for surfaces of maximal Albanese dimension:

$$K^2_S\geq 4 \,\chi(\omega_S).$$

This inequality was stated by Severi in the 30's (\cite{Severi}). Many years later, Catanese (\cite{Catanese}) showed a gap in the proof and proposed the inequality as a conjecture. Manetti gave a proof of the conjecture in the case $K_S$ ample, together with a profound analysis of the positivity properties of $\Omega^1_S$ (\cite{Manetti}). His approach provides further developments and refinements (see \cite{Mendeslopespardini} and \cite{Zhang}).

The key argument for a complete proof of the Severi inequality without extra hypotheses is given by Pardini (\cite{Pardini}, \cite{MLPSurvey}). She deduces the inequality from another well known one: the slope inequality for fibred surfaces (\cite{Xiao}, \cite{CH}), by using in a quick and clever way the property of being of maximal Albanese dimension. This method, which we call {\it Pardini's covering trick}, is completely general, and allows to apply a general principle: given an inequality verified by any maximal Albanese dimension variety you can remove all the numerical data involving lower dimensional subvarieties and obtain a new inequality.


In the present paper  we generalize the Severi inequality by an induction argument on the dimension of $X$. For this, we combine three different ingredients:

\begin{itemize}
\item A suitable version of Xiao's method for fibrations reducing the problem to \'etale covers and lower dimensional varieties (induction step). This is done in subsection \ref{subsectionxiao}.
\item The analysis of the behavior of {\it continuous} linear series on $X$, i.e., $|L\otimes \alpha|$ for $\alpha \in {\widehat A}$. It allows to assume good behavior of the linear system on an \'etale covering of $X$. This is done in Section 3.
\item The use of Pardini's covering trick (\cite{Pardini}) to remove unnecessary invariants. This is done in subsection \ref{prueba}.
\end{itemize}

The initial step of the induction process (the case of curves) is just a continuous version of Clifford's Lemma which we can also consider as a 1-dimensional version of Severi inequality. That's the reason why we add Clifford in the name of the inequality.

There are several particular cases of the main result of independent interest, which are introduced in Chapter 4. Here we present the more relevant ones. The first one is an extension to vector bundles

\medskip

\noindent {\bf Corollary A} ({\bf Generalized Clifford-Severi inequality for nef Vector Bundles}) {\it
Let $X$ be a projective, smooth variety of dimension $n$, over an algebraically closed field of characteristic 0. Let $a: X \longrightarrow A$ be a nontrivial map to an Abelian variety such that ${\rm dim}\,a(X)=k$. Let ${\cal F}$ be a {\it nef} vector bundle on $X$ with top Segre class $s({\cal F})$. Assume that $k=n$ or that ${\cal F}$ is $a$-big. Then

$$s({\cal F})\geq k! \, h^0_a({\cal F}).$$
}
\bigskip


When $L=K_X$ we obtain the sharp generalization of the classical Severi inequality or, equivalently, a lower bound for the volume of irregular varieties.

\medskip

\noindent {\bf Corollary B} ({\bf Generalized Severi inequality for $K_X$/the volume of irregular varieties}) {\it
 Let $X$ be an irregular, minimal, normal, projective, $\mathbb{Q}$-factorial variety of dimension $n$ over an algebraically closed field of characteristic 0. Let $X'$ be any desingularization of $X$. Then
\begin{itemize}
\item [(i)] If $X$ is of maximal Albanese dimension then
$$K_X^n \geq 2n!\,{\chi}(\omega _{X'})$$

\noindent and this bound is sharp (double covers of abelian varieties).
\item [(ii)]  If ${\rm dim} \, alb_X(X)=k<n$ and the continuous moving part $M$ of $K_{X'}$ is ${\rm alb}_{X'}$-big, then

$$K_X^n \geq 2k!\,h^0_{alb_{X'}}(\omega _{X'}).$$

\item [(iii)] If ${\rm dim} \, alb_X(X)=k<n$ and $X$ is of general type, then

$$K_X^n \geq k!\,h^0_{alb_{X'}}(\omega _{X'}).$$

\end{itemize}
}

\bigskip

This is not a merely application of Main Theorem but we must translate the problem of working with the $\mathbb{Q}$-Cartier Weil divisor $K_X$ on the singular variety $X$ to a suitable line bundle $L$ on a desingularization $X'$.

We can understand the previous result as a lower bound for the canonical volume of any smooth irregular variety $X'$ of general type, just applying Corollary B to a minimal model. It is not easy to find explicit lower bounds for the canonical volume of general type varieties (see for example \cite{Ch}) and in general it can be very small. It was conjectured by Reid that for varieties with global differential forms the volume should be high. As far as we know, even for the case of irregular 3-folds few results are known (see for example \cite{ChCh}, section 3). The bound given by Corollary B gives a sharp lower bound in the maximal Albanese dimension case, attained by double covers of abelian varieties:

$${\rm vol}(X')\geq 2\,n!\,{\chi}(\omega_{X'}).$$

During the final preparation of this work, the author has been informed that the content of Corollary B (i), in the case of minimal Gorenstein varieties, was independently proven by Zhang (\cite{TongZhang}). There the strategy of proof relies on applying Pardini's trick to the higher dimensional version of the so called Relative Noether inequality, bounding the linear sections of a nef line bundle on an fibred variety.

\medskip









Our last application shows as the Clifford-Severi inequality implies a slope inequality for fibred varieties. Given a fibred variety over a smooth curve, $f: X \longrightarrow B$ with general fibre $F$, and a line bundle $L$ on $X$, slope inequalities relate the invariants of $(X,L)$, $(F,L_{|F})$ and $B$. The best slope inequalities hold when some stability properties hold for $L$ (see \cite{survey} for a survey on this topic), and then a slope inequality, which we call $f$-{\it positivity of }$L$, follows

$$L^n\geq n\,\frac{L_{|F}^{n-1}}{h^0(F,L_{|F})}\,{\rm deg}f_*L.$$

\noindent The case $X$ of general type and $L=\omega_f$ encodes important numerical and geographical properties of the fibration, but $f$-positivity of $\omega_f$ is only known for fibred surfaces. In fact, $f$-positivity of $\omega_f$  for dimensions less or equal to $n$  implies a weaker inequality for the slope

$$K^n_f \geq 2n!\,\chi_f$$

\noindent from which one can deduce the Severi inequality for $L=\omega_X$ (see \cite{survey} Proposition 5.8, for a proof of this statement). We prove a converse, namely, that the Severi inequality for $L=\omega_f$ implies the above slope inequality when $b=g(B)=0$ and a slightly weaker result when $b \geq 1$.

\medskip

\noindent {\bf Corollary C} ({\bf Slope inequality}) {\it Let  $f:X \longrightarrow B$ be a relatively minimal fibration onto a smooth curve $B$ of genus $b$ with general fibre $F$ and ${\rm dim} \, X=n$. Assume that $X$ is of maximal Albanese dimension. Then

\begin{itemize}
\item [(i)] If $b=0$ then $K_f^n\geq 2 n! \,\chi_f$.
\item [(ii)] If $b \geq 1$, then $K_f^n \geq 2n!\,[{\chi}(\omega _f)+h^1_a(\omega _f)] \geq 2n!\,({\chi}(\omega _X) - 2{\chi} (\omega _B){\chi} (\omega _F)).$
\end{itemize}
}
\noindent
\medskip

The paper is divided as follows. The general set-up is  $X$ a smooth, projective variety over an algebraically closed field of characteristic 0, with a nontrivial map $a:X \longrightarrow A$ to an abelian variety.

In section 2 we study the properties of the invariants $h^0_a(L)$ and $\delta(L)$ associated to any line bundle $L$ on $X$. Section 3 is devoted to studying the behavior of linear systems under \'etale Galois covers. Here, the concepts of {\it continuous fixed part} and {\it continuous moving part} of $L$ play an special role. This analysis is a cornerstone in the proof of the main theorem and provides results of independent interest on linear series on irregular varieties.  It turns out that (see Theorems \ref{continuousresolution} and \ref{etale} for more complete results):


\medskip

\noindent {\bf Theorem D}  {\it Let $X$ be a smooth, projective variety and let $a: X \longrightarrow A$ be a nontrivial map to an Abelian variety such that $a^*: {\widehat A}\longrightarrow {\rm Pic}^0X$ is injective. Up to composing a blow-up with an \'etale Galois covering, $\lambda: {\widetilde X} \longrightarrow X$
\begin{itemize}
\item [(i)] For any $\alpha \in {\widehat A}$ we have a decomposition: $\lambda^*(L\otimes \alpha)={\widetilde W}+{\widetilde N_{\alpha}}$ where the divisor ${\widetilde W}$ is the fixed component (and does not depend on $\alpha$) and the moving part ${\widetilde N_{\alpha}}$ is base point free.
\item [(ii)] The map $a\circ \lambda$ factors through the algebraic fibre space induced by the linear system $|\lambda^*L|$. In particular ${\rm dim} \, \phi_{\lambda^*L}({\widetilde X})\geq {\rm dim} \, a(X)$ and so $|\lambda^*L|$ is generically finite provided $X$ is of maximal $a$-dimension.
\end{itemize}}

\medskip

Section 4 is devoted to proving Corollaries A, B and C, and giving some remarks, examples and other applications.
In Section 5 we prove the Main Theorem. There, an account of Xiao's method especially adapted to \'etale Galois covers of fibrations onto $\mathbb{P}^1$ is given.

\bigskip
\bigskip

\noindent {\bf Notations and conventions} Varieties are assumed to be smooth, projective, defined over an algebraically closed field $k$ of characteristic 0, except otherwise stated.
We use the notation $L$ for a (Cartier) divisor or its associated line bundle interchangeably, except for the canonical sheaf and divisor which will be denoted by $\omega_X$ and $K_X$ respectively.  We use additive or multiplicative notation interchangeably.

Given an abelian variety $A$ we denote by ${\widehat A}={\rm Pic}^0(A)$ its dual abelian variety, by ${\widehat A}_d$ the subgroup of its $d$-torsion elements and by ${\widehat A}_{{\rm tors}}=\bigcup_{d \in \mathbb{N}}{\widehat A}_d$ the set of all its torsion elements.

Given an irregular variety $X$ we set ${\rm Pic}^{\tau}(X)$ for the set of numerically torsion line bundles on $X$, i.e., the set of $M$ such that $M^{\otimes r}\in {\rm Pic}^0(X)$ for some $r \in \mathbb{N}$.
%

Given a map $f: X \longrightarrow Y$ (not necessarily surjective) we say that it is {\it generically finite} if $X \longrightarrow f(X)$ is. We say that $f$ factors through an algebraic fiber space of dimension $k$ with general fibre $G$ if dim$f(X)=k$ and so its Stein factorization decomposes $f=g\circ h: X \longrightarrow Z \longrightarrow Y$ where $Z$ is normal of dimension $k$, $g$ is an algebraic fiber space and $G$ is a general fibre of $g$. As usual, an algebraic fiber space of dimension 1 will be called a {\it fibration}.

We will use $\equiv$ for numerical equivalence and given two divisors $D_1, D_2$ we denote $D_1\preceq D_2$ if $D_2-D_1$ is pseudoeffective, i.e., it is a limit of (real) effective divisors or, equivalently, its product with arbitrary nef line bundles is nonnegative.

\bigskip

\noindent{\bf Acknowledgements}
The author wants to thank Rita Pardini, Lidia Stoppino, Gian Pietro Pirola, Joan Carles Naranjo, Margarida Mendes-Lopes and Mart{\'\i} Lahoz for fruitful conversations and encouragement during the preparation of this work. The author also wants to thank the anonymous referee for several suggestions to improve the presentation and proofs and for pointing out several inaccuracies in the first version of the paper.


\section{The continuous rank and the subcanonicity index}\label{continuousrank}

In this section we introduce the {\it continuous rank} of any coherent sheaf on irregular varieties, and the {\it subcanonicity degree} of a line bundle on a variety. The fundamental properties of these two numbers are that both behave well under algebraic equivalence, blow-ups, \'etale Galois coverings and hyperplane sections.

Given a line bundle on an irregular variety $X$, the behavior of the continuous system given by $|L\otimes \alpha|$ is studied in \cite{MLPP1} and in \cite{MLPP2}. Our interest relies on {\it general} elements in the continuous family.

\begin{definition} Let $X$ be an irregular variety and ${\cal F}$ a coherent sheaf on $X$. Let $A$ be an abelian variety and $a: X \longrightarrow A$ a nontrivial map. Define

$$h^i_a({\cal F}):={\rm min}\{h^i(X,{\cal F}\otimes a^*\alpha) \, | \, \alpha \in {\rm Pic}^0(A) \, \}$$

\noindent $h^0_a({\cal F})$ will be called the {\it continuous rank} of ${\cal F}$.

\smallskip


\noindent By abuse of notation, if the map is clear by the context, we will usually write $\alpha$ instead of $a^*\alpha$.
\end{definition}

\begin{remark} If $RS_a$ is the integral Fourier-Mukai functor associated to the map $a$ (see \cite{Pareschi}), we have that $h^i_a({\cal F})={\rm rank}RS^i_a({\cal F})$.
\end{remark}

\begin{definition} If $a: X \longrightarrow A$ is a map from $X$ to an abelian variety, we will say that $X$ {\it is of maximal a-dimension} if dim$X$
=dim$\,a(X)$. When $A={\rm Alb}(X)$ and $a={\rm alb}_X$ the definition corresponds to the classical notion of a maximal Albanese dimension variety.
\end{definition}

\begin{example}\label{ejemplos} As simple examples related to line bundles $L \in {\rm Pic}(X)$ on smooth $X$, we have

\begin{itemize}
\item If $L=K_X+D$ with $D$ nef, and $X$ is of maximal $a$-dimension, then $h^0_a(L)=\chi(X,L)$ by the Generic Vanishing theorem given by Pareschi-Popa (Theorem B in \cite{Pareschipopa}). If ${\rm dim} \, alb(X)={\rm dim} \, X -1$ then we still have $h^0_a({\cal L})\geq h^0_a({\cal L})-h^1_a({\cal L})=\chi(X,{\cal L})$.

\item The same holds for any $GV$-sheaf, like higher direct images of relative dualizing sheaves (see also \cite{Pareschipopa}).

\item If $L=K_X+D$ with $D$ big and nef, then $h^0_a(L)=\chi(X,L)=h^0(X,L)$ by Kawamata-Viehweg vanishing theorem.
\end{itemize}
\end{example}

\begin{remark} The author was informed by Pirola that using the arguments of \cite{MLPP1} and \cite{MLPP2} and bounding the obstructions to deforming a global section of $L$, the following inequality holds: $h^0_a(L)\geq h^0(X,L)-h^1(X,L)$ (unpublished).

\end{remark}

\begin{remark} The Severi inequality can be restated as $K^2_S \geq 4 h^0
_{alb_S}(\omega_S)$.
\end{remark}

Let's see now a non trivial example

\begin{proposition} \label{h0slope} Let $f: X \longrightarrow B$ be a fibration onto a smooth, projective curve of genus $b$, with general fibre F. Assume that $X$ is smooth of general type and of maximal Albanese dimension. Let $a
=alb_X$.Then

$$ h^0_a(\omega_f)=h^1_a(\omega_f)+\chi(X,\omega_f)=h^1_a(\omega_f)+\chi(X,\omega_X)-2\chi(B,\omega_B)\chi(F,\omega_F).$$

\noindent If $b=0,1$ we have in fact that $h^0_a(\omega_f)=\chi(X,\omega_X)-(2b-2)\chi(F,\omega_F)\geq \chi(X,\omega_X).$

\end{proposition}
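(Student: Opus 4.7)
My plan is to exploit Generic Vanishing twice—once on the fibre $F$ (to kill higher direct images) and once on the base $B$ (or rather its degenerate replacements when $b \leq 1$)—and reduce the computation of the $h^i_a$'s to an Euler characteristic calculation via the Leray spectral sequence for $f$.

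The main step is to show that for general $\alpha\in\widehat{A}$ one has $R^q f_*(\omega_f\otimes\alpha)=0$ for all $q\geq 1$. Since a general fibre $F$ of $f$ is of maximal Albanese dimension (as a general smooth fibre of a morphism from a maximal Albanese dimension variety), cohomology-and-base-change identifies the fibre of $R^q f_*(\omega_f\otimes\alpha)$ at a general point of $B$ with $H^q(F,\omega_F\otimes\alpha|_F)$, using $\omega_f|_F=\omega_F$. Under the quotient $\mathrm{Alb}(X)\twoheadrightarrow J(B)$, a general $\alpha$ restricts to a general element of $\mathrm{Pic}^0(F)$, so Green--Lazarsfeld's Generic Vanishing on $F$ gives $H^q(F,\omega_F\otimes\alpha|_F)=0$ for $q\geq 1$. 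This proves generic vanishing of $R^q f_*(\omega_f\otimes\alpha)$; combining with the torsion-freeness of $R^q f_*(\omega_X\otimes\alpha)$ (Koll\'ar--Hacon) and the projection formula, the sheaves themselves vanish for $q\geq 1$. Then Leray together with $\dim B=1$ gives $h^i(X,\omega_f\otimes\alpha)=h^i(B,f_*(\omega_f\otimes\alpha))=0$ for $i\geq 2$, so $\chi(X,\omega_f)=h^0_a(\omega_f)-h^1_a(\omega_f)$ by taking $\alpha$ generic enough that both $h^0$ and $h^1$ attain their minimum values simultaneously.

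Next I would compute $\chi(X,\omega_f)$ via Leray: since $\chi(B,R^i f_*\omega_X\otimes\omega_B^{-1})=\chi(B,R^i f_*\omega_X)-(2b-2)\,\mathrm{rk}\, R^i f_*\omega_X$ by Riemann--Roch on $B$, summing with alternating signs yields
\[
\chi(X,\omega_f)=\chi(X,\omega_X)-(2b-2)\sum_i (-1)^i h^i(F,\omega_F)=\chi(X,\omega_X)-2\chi(\omega_B)\chi(\omega_F),
\]
which proves the second equality in the statement.

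Finally, for the cases $b=0,1$ I need $h^1_a(\omega_f)=0$. When $b=1$, $\omega_B=\mathcal{O}_B$ so $\omega_f=\omega_X$, and Generic Vanishing on the maximal Albanese dimension variety $X$ gives $h^1_a(\omega_X)=0$ directly. When $b=0$, I use that $f_*(\omega_X\otimes\alpha)$ is a semipositive vector bundle on $\mathbb{P}^1$ (Fujita--Kawamata--Viehweg type semipositivity, which extends to twists by $\alpha\in\mathrm{Pic}^0$), so its summands have degree $\geq 0$; twisting by $\omega_B^{-1}=\mathcal{O}_{\mathbb{P}^1}(2)$ produces a bundle all of whose summands have degree $\geq 2$, which has vanishing $H^1$. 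Hence $h^1_a(\omega_f)=h^1(B,f_*(\omega_f\otimes\alpha))=0$, and the explicit formula follows. The inequality $h^0_a(\omega_f)\geq \chi\omega_X$ is then immediate because $2b-2\leq 0$ for $b\leq 1$ and $\chi(F,\omega_F)=h^0_a(\omega_F)\geq 0$ by Generic Vanishing on $F$. I expect the most delicate point to be establishing the semipositivity statement over $\mathbb{P}^1$ for the twisted sheaf $f_*(\omega_X\otimes\alpha)$, which requires invoking the appropriate generalization of Fujita's theorem to flat twists by topologically trivial line bundles.
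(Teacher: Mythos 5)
Your proof follows the same skeleton as the paper's: generic vanishing on the fibre kills $R^qf_*(\omega_f\otimes\alpha)$ for $q\geq 1$ and general $\alpha$, Leray gives $h^i(X,\omega_f\otimes\alpha)=h^i(B,f_*(\omega_f\otimes\alpha))$ and the vanishing for $i\geq 2$, and the first displayed identity is the Euler-characteristic bookkeeping you carry out. Two points need fixing. First, a general $\alpha\in\widehat{A}$ does \emph{not} restrict to a general element of ${\rm Pic}^0(F)$ (the image of $\widehat{A}\to{\rm Pic}^0(F)$ is only a subtorus); what is actually needed, and what the paper invokes, is generic vanishing applied to the generically finite map $\overline{a}:F\hookrightarrow X\to A$ itself, which bounds the codimension in $\widehat{A}$ of the loci where $H^q(F,\omega_F\otimes\overline{a}^*\alpha)\neq 0$. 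Second, and more seriously, $f_*(\omega_X\otimes\alpha)$ is \emph{not} semipositive in general: by the projection formula $f_*(\omega_X\otimes\alpha)=f_*(\omega_f\otimes\alpha)\otimes\omega_B$, so over $\mathbb{P}^1$ it is the $\mathcal{O}(-2)$-twist of the semipositive bundle and may well have summands of negative degree. Fujita-type semipositivity applies to the relative pushforward $f_*(\omega_f\otimes\alpha)$; fortunately that is exactly the bundle whose $H^1$ you must kill, and nefness alone (summands of degree $\geq 0>-1$) gives $H^1(\mathbb{P}^1,\cdot)=0$, so the conclusion survives once the twist is put in the right place. As for the point you rightly flag as delicate---extending Fujita's theorem to the twist by $\alpha$---the paper sidesteps it: since the minimal-$h^i$ locus is open and torsion points are dense, one may take $\alpha$ of prime order $p$ with $\alpha|_F\neq\mathcal{O}_F$, pass to the associated cyclic \'etale cover $g:\overline{X}\to X$ (so $\overline{f}=f\circ g$ is still a fibration with connected fibres), and realize $f_*(\omega_f\otimes\alpha)$ as a direct summand of $\overline{f}_*\omega_{\overline{f}}$, to which the ordinary untwisted Fujita theorem applies. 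Your $b=1$ argument ($\omega_f=\omega_X$ plus generic vanishing on $X$) coincides with the paper's.
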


\begin{proof} Let $A=Alb(X)$. The subset of ${\rm Pic}^0(A)$ where $h^0(X,\omega_f \otimes a^*\alpha)$ takes its minimum value is an open set, so it is enough to compute this value for a general  $\alpha \in {\widehat A}
_{{\rm tors}}$. In this case $R^if_*(\omega_f \otimes \alpha)$ is locally free on $B$ (\cite{Haconpardini}), of rank $h^i(F,\omega_F \otimes \alpha)$, which is zero for $i \geq 1$ by generic vanishing applied to the map ${\overline a}: F \hookrightarrow X \longrightarrow A$ (clearly $F$ is of maximal ${\overline a}$-dimension). Hence $H^i(X, \omega_f \otimes \alpha)=H^i(B,f_*(\omega_f \otimes \alpha))=0$ for $i\geq 2$, and the result follows. Observe that the equality $\chi(X,\omega_f)=\chi(X,\omega_X)-2\chi(B,\omega_B)\chi(F,\omega_F)$ holds since for any line bundle on $X$ the equality $\chi(X,L+kF)=\chi(X,L)+k\chi(F,L_{|F})$ follows by induction on $k$.

When $b=0,1$, consider the \'etale covering given by $\alpha$, $g: {\overline X}\longrightarrow X$. We can choose $\alpha$ of prime order $p$ such that ${\overline a}^*\alpha \neq {\cal O}_F$, and so ${\overline a}^*\alpha ^{\otimes i}\neq {\cal O}_F$ for all $1\leq i\leq p-1$.
Hence, the induced map ${\overline f}=f \circ g: {\overline X} \longrightarrow B$ is a fibration (the fibres are the connected \'etale cover of $F$ given by ${\overline a}^*\alpha \neq {\cal O}_F$).
Then $f_*(\omega_f \otimes \alpha)$ is a direct summand of  ${\overline f}_*(\omega_{\overline f})$, and so it is a nef vector bundle on $B$, since  ${\overline f}_*(\omega_{\overline f})$ is (\cite{fujita}). If $b=0$ this proves $H^1(X, \omega_f \otimes \alpha)=H^1(B,f_*(\omega_f \otimes \alpha))=0$. If $b=1$,
we also obtain that $H^1(X, \omega_f \otimes \alpha)=H^1(B,\omega_X \otimes \alpha)=0$ by generic vanishing.
\end{proof}

Finally let us see how $h^i_a$ behaves under \'etale covers, blow-ups and hyperplane sections.

\begin{proposition}\label{etale} Let $X$ be a smooth, projective irregular variety and let $a: X \longrightarrow A$ be a nontrivial map to an abelian variety $A$ such that $a^*:{\widehat A} \longrightarrow {\rm Pic}^0X$ is injective. Take $L \in {\rm Pic}(X)$.
\begin{itemize}
\item [(i)] Let $\mu : \widetilde{A} \longrightarrow A$ a degree $m$ isogeny and consider the base change diagram

$$\xymatrix { {\tilde X} \ar[r] ^{\tilde \mu} \ar[d] ^{\tilde a} & X \ar[d] ^a \\ {\tilde A} \ar[r] ^{\mu} & A }$$

Then $h^i_{\tilde a}({\tilde \mu} ^*(L)=mh^i_a(L)$.

\item [(ii)] If $\sigma: X' \longrightarrow X$ is a blow-up, then $h^0_a(\sigma^*L)=h^0_a(L)$.
\item [(iii)] If $H$ is a smooth divisor on $X$ not contracted by $a$, such that $H-L$ is nef, then $h^0_a(L_{|H})\geq h^0_a(L)$.
\end{itemize}
\end{proposition}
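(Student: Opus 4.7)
The three parts have quite different character, and I would treat them separately.

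For (i), I would exploit that $\widetilde{\mu}:\widetilde{X}\to X$ is the base change of the isogeny $\mu$, so is itself an étale cover of degree $m$. The standard decomposition on the abelian variety side, $\mu_*\mathcal{O}_{\widetilde A}=\bigoplus_{\beta\in\ker\widehat\mu}\beta^{-1}$, pulls back through the flat base change to give $\widetilde{\mu}_*\mathcal{O}_{\widetilde X}=\bigoplus_{\beta\in\ker\widehat\mu}a^*\beta^{-1}$. The projection formula then yields
\[
h^i\bigl(\widetilde X,\,\widetilde\mu^*(L\otimes a^*\alpha)\bigr)=\sum_{\beta\in\ker\widehat\mu}h^i\bigl(X,\,L\otimes a^*(\alpha\beta^{-1})\bigr).
\]
Choosing $\alpha$ in the intersection of the $\ker\widehat\mu$-translates of the open locus where $h^i(X,L\otimes a^*(\cdot))$ attains its minimum $h^i_a(L)$, every summand contributes $h^i_a(L)$ and the sum equals $m\,h^i_a(L)$. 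By upper semicontinuity this is the minimum of the left-hand side as $\alpha$ varies over $\widehat A$, giving $h^i_{\mu\circ\widetilde a}(\widetilde\mu^*L)=m\,h^i_a(L)$. For the other equality I would use that $\widehat\mu:\widehat A\to\widehat{\widetilde A}$ is surjective: every $\widetilde\alpha\in\widehat{\widetilde A}$ is of the form $\mu^*\alpha$, so the two families of twists $\widetilde a^*\widetilde\alpha$ and $\widetilde a^*\mu^*\alpha=\widetilde\mu^*a^*\alpha$ parametrize the same set of line bundles.

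For (ii), since $\sigma:X'\to X$ is a birational morphism between smooth projective varieties, one has $\sigma_*\mathcal{O}_{X'}=\mathcal{O}_X$, so the projection formula gives $H^0(X',\sigma^*(L\otimes a^*\alpha))=H^0(X,L\otimes a^*\alpha)$ for every $\alpha\in\widehat A$. Taking the minimum over $\alpha$ proves $h^0_a(\sigma^*L)=h^0_a(L)$.

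For (iii), I would start from the restriction sequence
\[
0\longrightarrow (L-H)\otimes a^*\alpha\longrightarrow L\otimes a^*\alpha\longrightarrow L_{|H}\otimes(a_{|H})^*\alpha\longrightarrow 0,
\]
which gives $h^0(H,L_{|H}\otimes(a_{|H})^*\alpha)\ge h^0(X,L\otimes a^*\alpha)-h^0(X,(L-H)\otimes a^*\alpha)$. The key point is to verify that the subtracted term vanishes for generic $\alpha$. Suppose not: then $(L-H)+a^*\alpha\sim D_\alpha$ with $D_\alpha$ effective, for $\alpha$ in a Zariski dense subset of $\widehat A$. Because $H-L$ is nef and $\alpha$ numerically trivial, $-D_\alpha$ is nef; intersecting with $A_0^{n-1}$ for any ample $A_0$ forces $D_\alpha\cdot A_0^{n-1}=0$, and since $D_\alpha$ is effective with every component meeting $A_0^{n-1}$ positively, $D_\alpha=0$. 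Thus $L-H=-a^*\alpha$ in $\mathrm{Pic}(X)$ for $\alpha$ in a dense set of the positive-dimensional $\widehat A$, contradicting that $L-H$ is a single fixed class. Hence $h^0(X,(L-H)\otimes a^*\alpha)=0$ generically, and the inequality $h^0_a(L_{|H})\ge h^0_a(L)$ follows by evaluating at $\alpha$ in the intersection of this locus with the locus where $h^0(X,L\otimes a^*\alpha)=h^0_a(L)$. The generating hypothesis on $a(H)$ is what ensures that $h^0_a(L_{|H})$ is well-defined, i.e.\ that $a_{|H}:H\to A$ is itself a generating map.

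The main obstacle is (iii): producing a clean proof of the vanishing of $h^0(X,(L-H)\otimes a^*\alpha)$ for generic $\alpha$ using only the nefness of $H-L$. Parts (i) and (ii) are essentially formal consequences of base change and the projection formula.
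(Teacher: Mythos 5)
Your proposal is correct and follows essentially the same route as the paper in all three parts: the $\widetilde\mu_*\mathcal{O}_{\widetilde X}=\bigoplus_{\gamma}a^*\gamma$ splitting plus projection formula and a generic choice of $\alpha$ for (i), $\sigma_*\mathcal{O}_{X'}=\mathcal{O}_X$ for (ii), and the restriction exact sequence with generic vanishing of $h^0(X,(L-H)\otimes\alpha)$ for (iii). The only difference is that in (iii) the paper simply asserts that vanishing, whereas you supply a correct justification via nefness of $H-L$ and the generating hypothesis.
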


\begin{proof}

\noindent (i) Let $N={\rm Ker}\mu \subseteq {\hat A}$, which is of order $m$. We have that ${\widetilde X}$ is connected since $a^*$ is injective and ${\tilde \mu} _*({\cal O}_{{\tilde X}})= \bigoplus_{\gamma \in N}a^*(\gamma)$. Let $Z_i\subsetneq {\hat A}$ be the jumping locus of the value $h^i(X, a^*(\alpha))$, which is a proper closed set. For any $\beta =\mu ^*(\alpha) \notin \mu ^* (Z_i)$ we have

$$h^i({\tilde X}, {\tilde \mu}^*L \otimes {\tilde a}^*\beta)=\bigoplus _{\gamma \in N}h^i(X,L \otimes a^*(\alpha \otimes \gamma))=mh^i_a(L)$$

\noindent since $\alpha \otimes \gamma \notin Z_i$ for all $\gamma \in N$.

\noindent (ii) Obvious.

\noindent (iii) For $\alpha \in {\widehat A}$ we can consider the exact sequence

$$0\longrightarrow {\cal O}_X((L-H)\otimes \alpha)\longrightarrow {\cal O}_X(L\otimes \alpha)\longrightarrow {\cal O}_H(L_H\otimes \alpha)\longrightarrow 0$$

Taking cohomology we have $h^0(X,(L-H)\otimes \alpha)=h^n(X,K_X+(H-L)\otimes \alpha^{-1})=0$ for $\alpha$ general, by the generic vanishing theorem (cf. Theorem B in \cite{Pareschipopa}), since we are asuming $H-L$ is nef.

\end{proof}

Let us consider now the second invariant we need. As in the case of curves, the smaller is the degree of a nef line bundle, the bigger is the ratio between its degree and its global sections. The best behavior, given by Clifford's lemma, holds for subcanonical line bundles. We introduce now an invariant which measures exactly this relation in terms of $r$-subcanonicity. More concretely we have

\begin{definition}\label{delta1} Given a nef line bundle $L$ we define

\begin{itemize}
\item [(i)] $r(L)= {\rm inf}\{r \geq 1 \,|\,L \preceq rK_X \}$ (degree of subcanonicity of $L$). Note that if this set is empty, then $r(L)=\infty$. We will say that $L$ is {\it subcanonical} if $r(L)=1$.
\item[(ii)] $\delta (L) =\frac{2r(L)}{2r(L)-1}$.
\end{itemize}
\end{definition}

\begin{remark}\label{delta2} The following are easy properties of $\delta$:

\begin{enumerate}
\item $\delta (L)$ is a decreasing function of $r$, varying between 2 and 1. $\delta(L)=2$ if and only if $L$ is (numerically) subcanonical and $\delta (L)=1$ if and only if $r(L)=\infty$.
\item $\delta (L)$ is a decreasing function of $L$, i.e., if $L_1\preceq L_2$ then $\delta (L_1) \geq \delta (L_2)$.
\item $\delta (L)$ increases by hyperplane section, i.e., if $M$ is a smooth section of a nef line bundle then $\delta (L_{|M}) \geq \delta (L)$.
\item If $\mu: {\widetilde X} \longrightarrow X$ is an \'etale Galois covering and ${\widetilde L}={\mu}^*L$, then $\delta ({\widetilde L})=\delta (L)$.
\item $\delta (L)$ increases by blow-up, i.e., if $\sigma: X' \longrightarrow X$ is any blow-up, then $\delta (\sigma^*(L))\geq \delta (L)$.
\item If $X$ is a variety of general type, then $K_X$ is big and hence for all $L$ there exists an $r$ such that $rK_X-L$ is effective. Then we always have $\delta (L) > 1$. The case $X=A$ an abelian variety is just the opposite: $\delta({\mathcal O}_A)=2$ and for all nef $L \neq {\mathcal O}_A$ we have $\delta (L) =1$.
\end{enumerate}
\end{remark}

\section{Some properties of continuous linear systems}

Let $a: X \longrightarrow A$ be a nontrivial map to an abelian variety and let $L\in {\rm Pic(X)}$. We are going to study the geometry of the continuous linear systems $|L\otimes \alpha|$ for $\alpha$ general. A good presentation and an analysis of the generic base loci of the {\it main continuous system} associated to $L$ is developed in \cite{MLPP1} and \cite{MLPP2}. Here we consider two related problems: the continuous resolution of base points of a continuous linear system  (Theorem 3.2), and the behavior of a general $|L\otimes \alpha|$ up to an \'etale covering (Theorem 3.5).

It is well known that there exists a nonempty open set

$$U\subseteq U_{a,L}=\{ \alpha \in {\widehat A} \,|\,h^0(X,L\otimes \alpha)=h^0_a(L) \}$$

 \noindent such that for $\alpha \in U$, if we consider the decomposition $L\otimes \alpha=W_{\alpha}+N_{\alpha}$ into its fixed and moving part respectively, then the divisors ${W_{\alpha}}$ belong to the same algebraic class, and the same occurs with the divisors $N_{\alpha}$. This is basically the construction given in \cite{BLNP}, section 5.1, for the canonical line bundle and ${\rm Pic}^0(X)$, but it holds in general. Roughly speaking, consider in $X\times {\widehat A}$ the closed set ${\cal D}=\{(p,\alpha)\,|\, p\in {\rm Bs}(|L\otimes \alpha|)\,\}$. When $|L\otimes \alpha|$ has a divisorial base component, ${\cal D}$ has a codimension 1 component which is dominant with respect to the second projection. On an open set $U$ of ${\rm Pic^0(X)}$ its fibres are algebraically equivalent.

We define now the {\it continuous moving part} and the {\it continuous fixed part} of $L$ as follows. Consider first the evaluation map

$$ev_U:=\oplus ev_{\alpha}: \bigoplus_{\alpha \in U}H^0(X,L\otimes \alpha) \otimes {\alpha}^{-1} \longrightarrow L.$$

We have that ${\rm Im}ev_U={\cal I}_U\otimes L$, where ${\cal I}_U$ is an ideal sheaf. In the proof of Lemma 3.2 we will see that this sheaf does not depend on the chosen open set $U$ verifying the conditions above. Consider its decomposition

$${\cal I}_U={\cal O}_X(-W)\otimes {\cal I}_B$$

\noindent with ${\rm codim}_{X}B\geq 2$.

\begin{definition}\label{continuousmovingpart}
\begin{itemize}
\item [(i)] $W$ is the {\it continuous fixed part} of $L$.
\item [(ii)] $M=L-W$ is the {\it continuous moving part} of $L$.
\end{itemize}
\end{definition}

Set ${\rm Im}ev_{\alpha}={\cal I}_{\alpha}\otimes L\otimes \alpha$ where ${\cal I}_{\alpha}={\cal O}(-W_{\alpha})\otimes {\cal I}_{B_{\alpha}}$ with ${\rm codim}_X(B_{\alpha})\geq 2$. Observe that, by construction we have

$${\cal I}_{\alpha}\otimes L\otimes \alpha \subseteq M\otimes \alpha \subseteq L\otimes \alpha.$$

\noindent Hence we have that

$$h^0_a(L)=h^0_a(M).$$

Following Pareschi and Popa (\cite{Pareschipopa2}), recall that a line bundle $L$ on $X$ is  {\it continuously globally generated with respect to the map $a$} if the continuous evaluations maps $ev_V$ defined above are surjective for {\it all} non-empty open sets $V \subseteq {\widehat A}$. First of all we have

\bigskip

\begin{lemma} With the previous notation, the sheaf ${\cal F}={\rm Im}(ev_U)$ is continuously globally generated with respect to $a$.
\end{lemma}

\begin{proof} This is the content of Remark 4.2 in \cite{BLNP}. To sketch a proof observe that given a point $p\in X$, if there exists a nonempty open subset $V\subseteq U$ such that $p$ is a base point of the linear systems $|L\otimes \alpha|$ for all $\alpha \in V$, then $p$ is also a base point of these linear systems for all $\alpha \in U$. Indeed, all the sections in $H^0(X,L\otimes \alpha)$, $\alpha \in U$, are limits of those with $\alpha \in V$, since on $U$ the dimensions $h^0(X,L\otimes \alpha)$ are constant.

Hence, for any open set $V \subseteq {\widehat A}$ we have

$${\rm Im}(ev_V)\supseteq {\rm Im}(ev_{V\cap U})={\rm Im}(ev_U)={\cal F}.$$

Finally, observe that by construction we have for any $\alpha \in U$

$${\cal I}_{\alpha}\otimes (L\otimes \alpha)\subseteq {\cal F}\otimes \alpha \subseteq L\otimes \alpha$$

\noindent and hence $H^0(X,{\cal F}\otimes \alpha)=H^0(X,L\otimes \alpha)$ and so we have ${\rm Im}(ev_{U,L})={\rm Im}(ev_{U,{\cal F}})$.
\end{proof}

\smallskip

Now we are going to see that the continuous fixed and moving parts of $L$ behave as the linear ones up to a suitable \'etale covering.

\bigskip

\begin{theorem}\label{continuousresolution} Let $X$ be a smooth projective irregular variety and let $a: X \longrightarrow A$ be a nontrivial map to an abelian variety, such that $a^*: {\widehat A} \longrightarrow {\rm Pic}^0X$ is injective. Then, up to a blow-up $\sigma$ and an \'etale Galois covering (more concretely, a base change by a multiplication map) $\mu$
\begin{equation}\label{continuousbasepoints}
\xymatrix {{\widetilde X}\ar[r]^{\mu} \ar[d]^{\widetilde a}& X' \ar[r]^{\sigma}\ar[d]^{a'} & X \ar[ld]^{a}\\A \ar[r]^{\mu_d} &A}
\end{equation}
\noindent we have that for all $\alpha \in {\widehat A}$

$$\lambda^*(L\otimes \alpha)={\widetilde W}+{\widetilde N_{\alpha}}$$

\noindent is the decomposition in the fixed and moving divisor, $\lambda=\mu \circ \sigma$, the linear system $|{\widetilde N_{\alpha}}|$ is base point free and the divisor ${\widetilde W}$ does not depend on $\alpha$.
\end{theorem}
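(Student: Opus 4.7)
The approach is to establish (i) first—that $\cF=\mathrm{Im}(ev_U)$ is continuously globally generated—and then deduce (ii) by an isogeny-and-blow-up argument in the spirit of Pareschi-Popa.

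For (i), the plan is to show that the image sheaf does not depend on the specific nonempty open $U\subseteq U_{a,L}$ used to define it. Given any nonempty open $V\subseteq \widehat A$, set $V':=V\cap U$, which is still open and nonempty; trivially $\mathrm{Im}(ev_{V'})\subseteq \mathrm{Im}(ev_V)\cap\cF$, so it suffices to show $\mathrm{Im}(ev_{V'})=\cF$, as this yields $\mathrm{Im}(ev_V)\supseteq \cF$ and hence the CGG property. To prove such an equality I would assemble the individual evaluations $ev_\alpha$ into a single morphism of sheaves on $X\times U$, obtained from the direct image of a Poincar\'e-twisted sheaf $p_X^*L\otimes \cP$ on $X\times \widehat A$. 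On the stratum $U$ the fixed and moving parts $W_\alpha,N_\alpha$ lie in constant algebraic classes by construction, so the image on $X\times U$ has the form $(\cI\otimes L)\boxtimes \cO_U$ for a single ideal $\cI\subseteq \cO_X$; restricting to fibers over any nonempty open subset of $U$ then returns the same $\cI\otimes L=\cF$.

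For (ii), by (i) the sheaf $\cF=\cO(-W)\otimes \cI_B\otimes L=\cI_B\otimes M$ is CGG with respect to $a$. The key input I would invoke is that a CGG sheaf becomes globally generated after pullback by a sufficiently divisible multiplication isogeny $\mu_d:A\to A$: on the induced \'etale cover $\widetilde \mu_d:X\times_A A\to X$ of degree $d^{2g}$ one has
$$H^0(X\times_A A,\widetilde\mu_d^*\cF)=\bigoplus_{\alpha\in\widehat A_d}H^0(X,\cF\otimes\alpha),$$
and for $d$ large enough the CGG property upgrades to global generation of $\widetilde\mu_d^*\cF$. Separately, I would blow up the codimension $\geq 2$ subscheme cut out by $\cI_B$ to obtain $\sigma:X'\to X$ with $\sigma^{-1}\cI_B\cdot\cO_{X'}=\cO_{X'}(-E)$ for some effective exceptional divisor $E$. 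Base-changing by $\mu_d$ produces $\mu:\widetilde X\to X'$ with $\widetilde X=X'\times_A A$, and setting $\lambda$ to be the composition $\widetilde X\to X'\to X$, the pullback $\lambda^*\cF=\cO_{\widetilde X}(-\widetilde E)\otimes \lambda^*M$ is globally generated. Hence $\lambda^*L=\widetilde W+\widetilde N$ with $\widetilde W:=\lambda^* W+\widetilde E$ fixed and $\widetilde N:=\lambda^*M-\widetilde E$ base-point-free.

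Finally, to extend the decomposition from $\alpha=\cO_A$ to every $\alpha\in\widehat A$, observe that CGG is preserved by twists in $\widehat A$: applying the same argument to $\cF\otimes \beta$ (possibly enlarging $d$) produces global generation of $\lambda^*\cF\otimes\widetilde a^*\beta$ simultaneously for all $\beta\in\widehat A$. Taking $\beta=\mu_d^*\alpha$ then yields the base-point-freeness of $|\widetilde N+\widetilde a^*(\mu_d^*\alpha)|$ for every $\alpha\in\widehat A$; the fixed part $\widetilde W$ is manifestly independent of $\alpha$ since $W$ is a single divisor. I expect the main obstacle to be step (i)—rigorously showing that the image ideal sheaf is constant across the stratum $U$—since the sections themselves vary nontrivially with $\alpha$, so the argument must genuinely use the constancy of the algebraic classes of $W_\alpha$ and $N_\alpha$ to descend to a single ideal $\cI$ on $X$.
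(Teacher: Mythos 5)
Your proposal takes essentially the same route as the paper: part (i) is reduced, exactly as in the text, to showing $\mathrm{Im}(ev_{V\cap U})=\mathrm{Im}(ev_U)$ (the paper justifies this by observing that sections of $L\otimes\alpha$ for $\alpha\in U$ are limits of sections for $\alpha$ in the dense open $V\cap U$, since $h^0$ is constant on $U$ --- a slightly more elementary version of your relative argument on $X\times U$), and part (ii) is precisely the paper's argument: blow up $B$ so that $\sigma^*\cF$ becomes a CGG line bundle $L'(-W')$ and then apply Debarre's Proposition 3.1, which is the ``key input'' you describe. The one point to be careful about is that the simultaneous global generation of $\mu^*L'(-W')\otimes\alpha$ for \emph{all} $\alpha\in\widehat A$ must come from the uniform statement of Debarre's result for a single isogeny, not from enlarging $d$ twist by twist as your last paragraph might suggest.
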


\begin{proof}Consider a blow-up $\sigma: X' \longrightarrow X$ such that $\sigma ^*({\cal O}_X(-W)\otimes{\cal I}_B)={\cal O}_{X'}(-W')$. If we set $L'=\sigma^*L$, then we have that ${\sigma}^*{\cal F}=L'(-W')$ is continuously globally generated with respect to $a'=a \circ \sigma$. Then we can apply a result of Debarre (cf. \cite{De} Proposition 3.1): there exists an \'etale Galois covering $\mu: {\widetilde X} \longrightarrow X'$ which we can assume is induced by a multiplication map on $A$, such that for any $\alpha \in {\widehat A}$, the line bundles ${\widetilde L}(-\mu^*W')\otimes {\alpha}={\mu}^*L'(-W')\otimes \alpha$ are {\it globally generated}. Hence, if we set ${\widetilde W}=\mu^*W'$ we have the statement. The proof of Debarre is for ${\rm alb}_X$ but it also works for any map $a$ such that $a^*$ is injective (hence ${\widetilde X}$ is connected).
\end{proof}

\begin{remark} Observe that in the previous construction we have the following properties
\begin{itemize}
\item For all $\alpha \in \widehat A$ the line bundles ${\widetilde N}_{\alpha}$ on ${\widetilde X}$ are algebraically equivalent.
\item $h^0_{\widetilde a}({\widetilde N_{\alpha}})=h^0_{\widetilde a}({\widetilde L})=({\rm deg}\mu)h^0_{a'}(L')=({\rm deg}\mu)h^0_a(M)=({\rm deg}\mu)h^0_a(L)$.
\item For all $l\geq 1$ we have $h^0({\widetilde X}, {\widetilde N_{\alpha}}^{\otimes l})=({\rm deg}\mu)h^0(X,(M\otimes \alpha)^{\otimes l})$.
\item Since ${\widetilde N_{\alpha}}$ are base point free, if $L$ is nef then $({\rm deg}\mu)L^n=({\widetilde L})^n\geq ({\widetilde N_{\alpha}})^n$.
\end{itemize}
\end{remark}

To finish the section we are going to see that given a nef line bundle with continuous sections, the image of the map induced by its linear sections, up to \'etale base change, factors the map $a$. Hence the dimension of the image of $X$ through the linear system $|L|$ is bounded by the $a$-dimension of $X$. In particular, it is maximal when $X$ is of maximal $a$-dimension. This will be a crucial point in the proof of the Main Theorem.

In order to do this, consider the following notation. Given a line bundle $L$ with nontrivial sections consider the morphism $\psi_L:X' \longrightarrow \mathbb{P}^m$ it induces on a suitable blow-up $X'$ of $X$. We denote by

$$\phi_L: X' \longrightarrow Z_L$$

\noindent the algebraic fibre space induced by $\psi_L$.

\begin{theorem} \label{etale} Let $X$ be a smooth n-dimensional variety and $a:X \longrightarrow A$ a nontrivial map to an abelian variety such that $a^*: {\widehat A} \longrightarrow {\rm Pic}^0X$ is injective. Let $k={\rm dim} \, a(X)$. Let $L\in{\rm Pic}X$, such that $h^0_a(L)\neq 0$. Let $M$ be its continuous moving part. Consider the previous notation of this section and the map $\lambda$ given by Theorem \ref{continuousresolution}. Then
\begin{itemize}
\item [(i)] There exists a factorization $\widetilde a=\phi _{\widetilde L}\circ a'$, where $a':Z_{\widetilde L}\longrightarrow A$. In particular ${\rm dim} \, \psi_{\widetilde L}({\widetilde X})\geq k$. Moreover, for all $\alpha \in {\widehat A}$ we have that $N_{\alpha}\in \phi_{\widetilde L}^*{\rm Pic}Z_{\widetilde L}$.
\item [(ii)] The linear system $|{\widetilde L}|$ induces a generically finite map provided one of the following conditions hold
  \begin{itemize}
  \item $|L|$ induces a generically finite map.
  \item $X$ is of maximal $a$-dimension.
  \item $M$ is $a$-big.
  \end{itemize}
\end{itemize}
\end{theorem}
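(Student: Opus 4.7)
The plan is to deduce (i) by showing that for every $\alpha \in \widehat{A}$, the line bundle $\widetilde{\alpha} := \widetilde{a}^{*}\alpha$ is pulled back from $Z_{\widetilde{L}}$ under $\phi_{\widetilde{L}}$. First I would observe that since $\widetilde{W}$ is a fixed divisor, the morphism $\phi_{\widetilde{L}}$ coincides with $\phi_{\widetilde{N}_0}$, so by construction $\widetilde{N}_0 = \phi_{\widetilde{L}}^{*}L_Z$ for some $L_Z \in {\rm Pic}(Z_{\widetilde{L}})$; in particular, on every connected general fibre $F$ of $\phi_{\widetilde{L}}$, $\widetilde{N}_0|_F \cong \mathcal{O}_F$. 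Next, by Theorem \ref{continuousresolution}(ii), the twisted moving part $\widetilde{N}_\alpha = \widetilde{N}_0 \otimes \widetilde{\alpha}$ is base-point free: for each $p \in F$ some section of $\widetilde{N}_\alpha$ is non-zero at $p$, and its restriction to $F$ yields a non-zero section of $\widetilde{\alpha}|_F \in {\rm Pic}^{0}(F)$. Since any effective line bundle that is numerically trivial must itself be trivial (otherwise intersection with an ample class would be positive), $\widetilde{\alpha}|_F \cong \mathcal{O}_F$.

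Having established that all elements of $\widehat{A}$ restrict trivially to a general fibre of $\phi_{\widetilde{L}}$, the rest of (i) is formal. The induced homomorphism $\widehat{A}\to {\rm Pic}^{0}(F)$ vanishes, so the dual morphism ${\rm Alb}(F)\to A$ is zero and $\widetilde{a}|_F$ is constant. By rigidity of morphisms into abelian varieties and normality of $Z_{\widetilde{L}}$ (the Stein base), $\widetilde{a}$ then factors as $\widetilde{a}=\widetilde{h}\circ \phi_{\widetilde{L}}$ for a morphism $\widetilde{h}:Z_{\widetilde{L}}\to A$. This yields both assertions in (i) at once: $\widetilde{N}_\alpha = \phi_{\widetilde{L}}^{*}(L_Z\otimes \widetilde{h}^{*}\alpha) \in \phi_{\widetilde{L}}^{*}{\rm Pic}(Z_{\widetilde{L}})$, and $\dim \phi_{\widetilde{L}}(\widetilde{X}) \geq \dim \widetilde{h}(Z_{\widetilde{L}}) = \dim \widetilde{a}(\widetilde{X}) = k$.

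For (ii) I would treat the three sufficient conditions separately. If $|L|$ is already generically finite, then $\phi_L \circ \lambda : \widetilde{X}\dashrightarrow \mathbb{P}^{N}$ is generically finite as a composition of generically finite maps; its defining sections lie in $\lambda^{*}H^{0}(X,L)\subseteq H^{0}(\widetilde{X},\widetilde{L})$, so $\phi_{\widetilde{L}}$ dominates it and must itself be generically finite. If $X$ is of maximal $a$-dimension, then $k=n$ and the bound in (i) forces $\dim \phi_{\widetilde{L}}(\widetilde{X})=n$. If $M_{|G}$ is big, let $\widetilde{G}$ be a general fibre of $\widetilde{a}$: it is an \'etale cover of a general fibre $G$ of $a$, and away from the codimension-$\geq 2$ base locus $B$ (which a general fibre avoids) $\widetilde{N}_0|_{\widetilde{G}}$ is the pullback of $M|_G$, hence big. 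Being also base-point free, it defines a generically finite map on $\widetilde{G}$, so $\phi_{\widetilde{L}}|_{\widetilde{G}}$ is generically finite; combined with the factorization $\widetilde{a}=\widetilde{h}\circ \phi_{\widetilde{L}}$ this gives $\dim Z_{\widetilde{L}} = \dim \widetilde{h}(Z_{\widetilde{L}}) + \dim(\phi_{\widetilde{L}}(\widetilde{G})) = k + (n-k) = n$.

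The crux I expect is the triviality of $\widetilde{\alpha}|_F$ argued in the first paragraph: it is there that the whole package (fixed part $\widetilde{W}$, base-point freeness of every twisted $\widetilde{N}_\alpha$ guaranteed by Theorem \ref{continuousresolution}, the structural consequence $\widetilde{N}_0|_F\cong \mathcal{O}_F$ of the Stein factorization, and the fact that effective numerically trivial line bundles are trivial) has to be combined. Everything else --- the descent of $\widetilde{a}$ to $Z_{\widetilde{L}}$ and the three sub-cases of (ii) --- becomes largely bookkeeping once this triviality is in place.
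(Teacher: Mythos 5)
Your proof is correct and follows essentially the same route as the paper: both reduce (i) to showing that every $\widetilde{a}^*\alpha$ restricts trivially to a general (connected) fibre of $\phi_{\widetilde L}$ --- you extract the required nonzero section of $\widetilde{a}^*\alpha|_F$ from base-point freeness of $\widetilde N_\alpha$, while the paper gets it from $h^0(\widetilde X,\widetilde N_0\otimes\widetilde a^*\alpha)\neq 0$ via the projection formula --- and both settle (ii) by the same three case distinctions, the bigness case resting on $\widetilde N_0$ being pulled back from $Z_{\widetilde L}$ so that positive-dimensional fibres of $\phi_{\widetilde L}$ inside a fibre of $\widetilde a$ would kill bigness. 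The only quibble is your parenthetical that a general fibre of $a$ ``avoids'' the base locus $B$: it need only meet $B$ in codimension $\geq 2$, which is all the bigness comparison between $M|_G$ and $\widetilde N_0|_{\widetilde G}$ actually requires.
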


\begin{proof}

\noindent (i) Let $T$ be a general (connected) fibre of $\phi _{\widetilde L}$, and let $R\in {\rm Pic}(Z_{\widetilde L})$ such that ${\widetilde N}_0=\phi_{\widetilde L}^*(R)$. By Remark 3.4, if $h^0_a(L)\neq 0$ then $h^0_{\widetilde a}({\widetilde N}_0)\neq 0$ and hence for all $\alpha \in {\widehat A}$ we have

$$h^0({\widetilde X},{\widetilde N}_0\otimes {\widetilde a}^*{\alpha})\neq 0.$$

By projection formula we have that $h^0(Z_{\widetilde L},R \otimes (\phi_{\widetilde L})_*({\widetilde a}^*\alpha))\neq 0$ for all $\alpha \in {\widehat A}$. The sheaf $(\phi_{\widetilde L})_*({\widetilde a}^*\alpha)$ is torsion free of rank $h^0(T,({\widetilde a}^*\alpha)_{|T})$ and so it must be non zero for all $\alpha$. Since $({\widetilde a}^*\alpha)\in {\rm Pic}^0(T)$ this can only happen if $({\widetilde a}^*\alpha)_{|T}={\mathcal O}_T$. Hence the natural composition map

$${\widehat A} \longrightarrow {\rm Pic}^0 ({\widetilde X}) \longrightarrow {\rm Pic}^0 (T)$$

\noindent is zero. Dualizing we obtain that for general $T$ the map ${\widetilde a}$ contracts $T$ to a point.

The rest of the statement follows immediately from this factorization.

(ii) If $|L|$ induces a generically finite map clearly so does $|{\widetilde L}|$. By (i) the same holds if $X$ is of maximal $a$-dimension ($k=n$). For the rest, observe that the fibres $G'$ of the map $a \circ \lambda$ are just disconnected copies of $G$. On the other hand, by construction ${\widetilde N}_{0|G'}$ is big if and only if $M_{|G'}$ is, hence we can assume that ${\widetilde N}_0$ is ${\widetilde a}$-big. But if $r<n$ this is not possible since the fibres $G'$ are covered by those of $\phi_{\widetilde L}$.

\end{proof}

\begin{remark} Clearly $a$-bigness of $M$ follows from bigness of $M$ itself, but in general it can be difficult to check. Here we have three sufficient conditions.
\begin{itemize}
\item If $L$ is $a$-big (for example, if $L$ itself is big) and kod$(G,W_{|G})\leq 0$, then $M$ $a$-is big.
\item If $L$ is continuously globally generated in codimension 2 (i.e. $W=\emptyset$), then bigness of $M$ is equivalent to bigness of $L$, i.e., $L^n>0$.
\item Continuous global generation of $L$ outside of the ramification locus of the Albanese map of $X$, is implied by $M$-regularity of $L$ (see \cite{Pareschipopa2}).
\end{itemize}
\end{remark}

\begin{remark} In general $a$-bigness of $L$ does not imply $a$-bigness of its continuous moving part $M$. Take for example $X=S\times Y$ where $S$ is a general type surface with $p_g=1$, $q=0$ and $Y$ is a general type and albanese general type variety of dimension $n-2$, with base point free paracanonical system $N$. Then clearly $L=\omega_X$ is big, its continuous fixed part is $Z=\pi_1^*(D)$ ($D$ the only canonical section of $S$) and $M=\pi_2^*N$, the Albanese map of $X$ is just the Albanese map of $Y$ and $M_{|G}$ is not big.
\end{remark}

\section{The generalized Clifford-Severi Inequality}

In the previous sections we have introduced all the ingredients needed to state and prove our results. All of them are particular instances or corollaries of the Main Theorem in the introduction which we reproduce here.

\begin{theorem}\label{CS}({\bf Generalized Clifford-Severi Inequality})

Let $X$ be a smooth, projective variety of dimension $n$, over an algebraically closed field of characteristic 0. Let $a: X \longrightarrow A$ be a nontrivial map to an Abelian variety and let $L \in {\rm Pic}(X)$ be a nef line bundle.
\begin{itemize}

\item [(i)] If $X$ is of maximal $a$-dimension then $$L^n \geq  \delta (L)\, n! \,h^0_a(L).$$
\noindent In particular, if $L \preceq K_X$, then $L^n \geq 2n! \, h^0_a(L).$
\item [(ii)] Assume $n> {\rm dim} \, a(X)=k\geq 1$ and let $M$ be the continuous moving part of $L$. If $M$ is $a$-big then
$$L^n \geq  \delta (L)\, k! \,h^0_a(L).$$
\item [(iii)] Assume that $n>{\rm dim} \, a(X)=k\geq 1$ and that $L$ is $a$-big. Then  $$L^n \geq  k! \,h^0_a(L).$$
\end{itemize}
\end{theorem}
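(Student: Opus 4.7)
I will argue by induction on $n=\dim X$.

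\textbf{Base case $n=1$.} Here $X$ is an irregular smooth curve and only (i) applies, with $k=n=1$. The statement becomes a continuous Clifford-type inequality $\deg L\geq \delta(L)\,h^0_a(L)$. For $L$ numerically subcanonical ($\delta(L)=2$) and a generic $\alpha\in\widehat A$, the twist $L\otimes\alpha$ is special and has no ``trivial'' sections, so the usual Clifford bound yields $\deg L\geq 2h^0(L\otimes\alpha)=2h^0_a(L)$. The $r$-subcanonical case is reduced to the subcanonical one by an étale cyclic covering trick: taking a degree $2r-1$ cover where the pull-back of $L$ acquires a suitable relation with the canonical bundle and re-normalising.

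\textbf{Inductive step.} Assume the result in all dimensions $<n$. First I replace $X$ by a suitable \'etale Galois cover in order to linearise the problem. By Theorem \ref{continuousresolution}, after a blow-up and a multiplication-type \'etale cover $\lambda:\widetilde X\to X$, we may write $\lambda^*(L\otimes\alpha)=\widetilde W+\widetilde N_\alpha$ where $\widetilde W$ is a fixed divisor independent of $\alpha$ and $\widetilde N_\alpha$ is base-point free. In each of the three cases of the theorem the hypotheses of Theorem \ref{etale} are satisfied, so $|\widetilde L|$ induces a generically finite map (cases (i), (ii)) or at worst a map whose image has dimension $k$ (case (iii)). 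Moreover $\widetilde a$ factorises through $\phi_{\widetilde L}$, which in case (i) means the image has dimension $n$. Next I construct a fibration: choose a general pencil inside $|\widetilde N_0|$ (or a higher multiple if necessary), blow up its base locus, and obtain $f:\widetilde X'\to\mathbb{P}^1$ with general fibre $F$ a smooth $(n-1)$-dimensional variety. The restriction $\widetilde a|_F:F\to A$ is again generating, of $a$-dimension $k$ in case (i) equal to $\dim F$, and in cases (ii)-(iii) satisfying the analogous fibre-bigness hypotheses.

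\textbf{Xiao's method plus induction.} I apply the adapted Xiao slope-type argument (developed in Subsection \ref{subsectionxiao} of the paper) to $f$. The method produces, from the Harder-Narasimhan-type filtration of $f_*\widetilde L^{\otimes m}$, a lower bound of the form
\[
\widetilde L^{n}\;\geq\;\delta(\widetilde L)\,n\cdot\bigl(\widetilde L_{|F}\bigr)^{n-1}\cdot\frac{h^0_{\widetilde a}(\widetilde L)}{h^0_{\widetilde a|_F}(\widetilde L_{|F})}\;-\;\bigl(\text{correction terms involving }\widetilde W,\text{ lower strata}\bigr).
\]
To $\widetilde L_{|F}$ on $F$ I apply the induction hypothesis, obtaining $(\widetilde L_{|F})^{n-1}\geq \delta(\widetilde L_{|F})(n-1)!\,h^0_{\widetilde a|_F}(\widetilde L_{|F})$; by Remark \ref{delta2}(3) and Proposition \ref{etale}(iii) both $\delta$ and $h^0_{\widetilde a|_F}(\widetilde L_{|F})\geq h^0_{\widetilde a}(\widetilde L)$ are controlled by the original data, the latter with possible loss that will be absorbed below. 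Combining the two inequalities gives the desired $\delta(L)\,n!\,h^0_a(L)$ (or $k!\,h^0_a(L)$ in cases (ii)-(iii)) up to the correction terms.

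\textbf{Removing correction terms via Pardini's trick.} Finally I kill the correction terms using Pardini's covering trick. Let $\mu_d:A\to A$ be the multiplication-by-$d$ isogeny (degree $d^{2\dim A}$) and form the associated base change $\widetilde X_d$. By Proposition \ref{etale}(i), $L^n$ scales as $d^{2\dim A}$ under the pull-back, and so does $h^0_a(L)$; on the other hand, the correction terms coming from $\widetilde W$ and from the lower-dimensional strata of Xiao's filtration scale with strictly smaller powers of $d$ (they correspond to divisors, not top self-intersections). Letting $d\to\infty$ absorbs them and yields the clean bound. The main obstacle is the precise verification of this scaling in Xiao's bound — in particular, arranging that the filtration and the continuous resolution of base points in Theorem \ref{continuousresolution} are compatible with the tower of multiplication covers, so that the leading term carries the full factor $\delta(L)\cdot n!$ (resp.\ $k!$) while everything else is strictly lower order in $d$. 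Case (iii) differs only in that, without bigness of $M_{|G}$, one loses the $\delta(L)$ factor because the generically-finite conclusion of Theorem \ref{etale}(ii) is unavailable and one must compare $L^n$ with $\widetilde N_\alpha^n$ via a coarser estimate.
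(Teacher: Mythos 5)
Your overall architecture (induction on dimension, continuous resolution of base points on an \'etale cover, Xiao's method on a fibration over $\mathbb{P}^1$, then Pardini's trick) matches the paper, but there is a genuine gap at the single most delicate point: the choice of the pencil that defines the fibration. You take a general pencil inside $|\widetilde N_0|$, the continuous moving part of $L$, and then claim that the resulting correction terms ``scale with strictly smaller powers of $d$ (they correspond to divisors, not top self-intersections).'' That reasoning is false: under pull-back by the multiplication-by-$d$ isogeny $\mu_d$ (degree $d^{2q}$, $q=\dim A$), \emph{every} intersection number $\mu_d^*D_1\cdots\mu_d^*D_n$ scales by exactly $d^{2q}$, regardless of whether it is a top self-intersection. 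If the fibre class is $N_0$, the Xiao correction term is of the shape $n\,L^{n-1}N_0$, which scales by $d^{2q}$ --- the same power as $L^n$ and $h^0_a(L)$ --- so letting $d\to\infty$ absorbs nothing. The paper's proof instead fibres $X$ by a pencil in $|M|$ with $M=a^*H$, $H$ a fixed very ample line bundle on $A$; the whole point of Pardini's trick is the identity $\mu_d^*H\equiv d^2H$ on $A$, which gives $\widetilde M\equiv\frac{1}{d^2}\mu_d^*M$ and hence $\widetilde L^{\,n-1}\widetilde M=d^{2q-2}L^{n-1}M$, strictly lower order than $d^{2q}$. This is what Step 1 of the paper's proof (the Claim $L^n+n\delta(L)L^{n-1}M\geq\delta(L)\,n!\,h^0_a(L)$ for $M$ pulled back from $A$) is designed to feed into Step 2. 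Without pulling the pencil back from the abelian variety your limiting argument does not close.

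Two smaller points. In the base case $n=1$, Clifford's theorem gives $\deg L\geq 2h^0(L\otimes\alpha)-2$, not $\deg L\geq 2h^0(L\otimes\alpha)$; the paper removes the $-2$ precisely by passing to the $\widehat A_d$-covers and letting $d\to\infty$, so even there the covering trick is needed rather than a direct appeal to Clifford (and the $r$-subcanonical nonspecial case is handled by Riemann--Roch directly, no cyclic cover needed). In the inductive step, the intermediate inequality you write, with the ratio $h^0_{\widetilde a}(\widetilde L)/h^0_{\widetilde a|_F}(\widetilde L_{|F})$, is essentially $f$-positivity of $L$, which the paper explicitly notes is unknown in higher dimensions; what Xiao's method actually yields is $\widetilde L^n\geq n\sum_i \widetilde P_i^{\,n-1}(a_i-a_{i+1})$, and the induction hypothesis must be applied to the restricted moving parts $\widetilde P_{ri}$ (using $h^0_{\widetilde a}(\widetilde P_{ri})\geq ri$ from the continuous version of the method, Proposition \ref{arreglalotodo}) together with $a_1+\cdots+a_m\geq h^0(X,L)-h^0(F,L_{|F})$ and a second application of induction to $L_{|F}$ to control the subtracted term. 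Your treatment of (ii) and (iii) by comparison is closer to the paper's (reduction to a $k$-dimensional complete intersection, resp.\ to the base of the algebraic fibre space), but both rest on part (i) and so inherit the gap above.
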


The proof of this theorem relies on a suitable use of Xiao's method on \'etale Galois coverings of $X$ and it is postponed to the next Section.

\begin{remark}\label{otraversion} Since $\delta$ is a decreasing function, we do not need to know the exact value of $r(L)$ to obtain an inequality. Hence, part (i) of the theorem can be rephrased as (analogously for (ii)): when $X$ is of maximal $a$-dimension
\begin{itemize}
\item [(i1)] For any nef $L$ we have $L^n \geq n! \, h^0_a(L)$.
\item [(i2)] If $L \preceq rK_X$ ($r \geq 1$), then $L^n \geq \frac{2r}{2r-1}\,n!\,h^0_a(L)$.
\end{itemize}

\end{remark}

\begin{remark}\begin{itemize}
\item [(i)] The bound given in \ref{CS} (i) is sharp for the lowest value of $\delta(L)$: take $X=A$ an abelian variety and $L$ an ample line bundle on $A$. Also, in general it is asymptotically sharp for general type varieties $X$ and sufficiently ample line bundles $L=mH$, as shown by asymptotic Riemann-Roch theorem when $m \rightarrow \infty$.
\item [(ii)] For varieties of non maximal Albanese dimension Theorem \ref{CS} cannot hold without extra hypothesis as those in (ii) and (iii). Indeed, take $L$ to be the pullback of a line bundle on the Albanese image of $X$. Then $L^n=0$ and in general $h^0_{alb_X}(L)\neq 0$.
\end{itemize}
\end{remark}

\begin{remark} As shown in Remark \ref{ejemplos}, the continuous rank $h^0_a(L)$ is just $\chi (X,L)$ or even $h^0(X,L)$ under conditions of positivity of $L$ and of maximal $a$-dimension of $X$,  which produce (generic) vanishing of higher cohomology.
\end{remark}

\bigskip

We proceed now to prove the corollaries A,B and C stated in the Introduction.

\begin{proof}{\it Corollary A.} Let $Y=\mathbb{P}_X({\cal F})$, $\pi: Y \longrightarrow X$ the natural projection and $L={\cal O}_Y(1)$ the tautological line bundle.  Consider the induced map $a_Y:=a\circ \pi: Y \longrightarrow A$; we have ${\rm dim} \, a_Y(Y)=k$. Observe that  $s({\cal F})={\cal O}_Y(1)^{n+l-1}$ where $l={\rm rank}{\cal F}$.

Let $G'$ be the fibre of the algebraic fibre space induced by the Albanese map of $Y$.
If $k=n$ then $G'=\mathbb{P}^{l-1}$, $L_{|G'}={\cal O}_{\mathbb{P}^{l-1}}(1)$ and hypothesis of theorem \ref{CS} (ii) holds.
If $k<n$ then $G'$ is a projective bundle on $G$. If ${\cal F}_{|G}$ is big, then the hypothesis of (iii) holds.  In both cases we use $\delta(L) \geq 1$.
\end{proof}

\begin{proof}{\it Corollary B.}  Since $X$ is normal we can consider the canonical Weil divisor $K_X$ and set $\omega_X={\cal O}_X(K_X)$ for the associated divisorial sheaf. Consider a desingularization $\sigma: X' \longrightarrow X$; since $X$ is minimal it has terminal singularities and we have that ${\rm Alb}X={\rm Alb}X'=:A$ and ${\rm alb}_{X'}={\rm alb}_{X}\circ \sigma$ (\cite{BS}, Ch.2.4). Terminal singularities also give $H^0(X',\omega_{X'})\cong H^0(X,\sigma _* \omega_{X'})=H^0(X,\omega_X)$ and so $h^0_{{\rm alb}_X}(\omega_X)=h^0_{{\rm alb}_{X'}}(\omega_{X'})$ and it coincides with $\chi(\omega_{X'})$ when $X$ is of maximal Albanese dimension.

To avoid working with the singularities of $X$, we are going to show now how to reduce the computation to a suitable {\it line bundle} $M$ on $X'$ such that $K_X^n \geq M^n$ and $h^0_{alb_{X'}}(M)=h^0_{alb_{X'}}(\omega_{X'})$.

Since the inequalities are invariant through \'etale covers, by Theorem \ref{continuousresolution} we can consider an extra blow-up and a base change through a multiplication map and assume that $K_{X'}=W+M$, where $W$ and $M$ are the continuous fixed divisor and the continuous moving divisor of $K_{X'}$ respectively and the linear system $|M|$ is base point free. Moreover, as seen after Definition 3.1, we have that $h^0_{alb_{X'}}(M)=h^0_{alb_{X'}}(\omega_{X'})$. Since $|M|$ is base point free we have an induced morphism $\lambda': X' \longrightarrow \mathbb{P}':=\mathbb{P}(H^0(X',\omega _{X'})^*)$ such that $M={\lambda'}^*T_{\mathbb{P}'}$ where  $T_{\mathbb{P}'}={\cal O}_{\mathbb{P}'}(1)$.


Since the variety $X$ is normal and $\mathbb{Q}$-factorial, we can apply a $\mathbb{Q}$-{\it resolution of base loci} for Weil divisors. We claim that there exists a desingularization (which we still call $X'$) $\sigma: X' \longrightarrow X$, an effective Weil divisor $E_0$ on $X$ and an effective $\sigma$-exceptional $\mathbb{Q}$-divisor $E$ on $X'$ such that

$$\sigma^*(K_X-E_0)-E \sim_{\mathbb{Q}} (\lambda \circ \sigma)^* T_{\mathbb{P}}$$

\noindent where $\lambda: X \dashrightarrow \mathbb{P}(H^0(X,\omega_X)^*)$ is the natural map and $\sim_{\mathbb{Q}}$ means $\mathbb{Q}$-linear equivalence. The natural identification $\mathbb{P}(H^0(X',\omega _{X'})^*) =\mathbb{P}(H^0(X,\omega_X)^*)$ induces $\lambda'=\lambda \circ \sigma$ and so $(\lambda \circ \sigma)^* T_{\mathbb{P}}=(\lambda')^*T_{\mathbb{P}'}=M$.

Indeed, the claim is an absolute version of Lemma 1.1 in \cite{Ohno}. We sketch here the proof and refer there for details. Consider $E_0$ a Weil divisor on $X$ such that  $ev: H^0(X,K_{X})\otimes {\cal O}{_X} \longrightarrow {\cal O}_{X}(K_{X}-E_0)$ is surjective in codimension 1. Take a positive integer $m$ such that $m(K_{X}-E_0)$ is Cartier.  Up to a suitable resolution of singularities and base loci, we obtain a desingularization $\sigma: X' \longrightarrow X$ and an effective divisor ${\overline E}$ on $X'$ such that the natural morphism $\sigma^*(Sym^mH^0(X,K_{X})\otimes {\cal O}_{X})\longrightarrow \sigma^*(m(K_X-E_0))\otimes {\cal O}_{X'}(-{\overline E})$ is surjective. Hence we obtain

$$\sigma^*(m(K_X-E_0))-{\overline E}=\lambda_m^*T_{\mathbb{P}((Sym^mH^0(K_X))^*)}=m(\lambda \circ \sigma)^*T_{\mathbb{P}(H^0(X,K_X)^*)}=mM$$

\noindent where $\lambda_m$ is the natural morphism  $X' \longrightarrow {\mathbb{P}((Sym^mH^0(K_X))^*)}$. The claim then follows just dividing by $m$ and defining $E:=\frac{1}{m}{\overline E}$.

In order to conclude the proof of the corollary, just observe that $M$ and $K_X$ are nef and that $E_0$ and $E$ are effective. Hence

$$K_X^n=(\sigma^* K_X)^n\geq M^n.$$

Then the results follow from Main Theorem applied to $(X',M)$, using that $\delta(M)=2$ since it is subcanonical.

\end{proof}

\begin{remark} Irregular, general type varieties of Albanese dimension $k$, with ${\rm vol}(X)< 2k!\chi(\omega_X)$ seem to have strong restrictions, if they exist. At least when the minimal model $X$ is Gorenstein, the albanese map of $X$ must factor through a fibration with general fibre $G$ of dimension $l\geq 1$ such that $K^l_G=1$
as follows as a corollary of proof of Theorem \ref{CS} (see Remark 5.8). In particular, for minimal, Gorenstein $X$ of Albanese dimension $(n-1)$, the inequality $K^n_X\geq 2 (n-1)! \,\chi(\omega_X)$ also holds.

\end{remark}

\medskip

\begin{proof}{\it Corollary C} (i) If $b=0$ just add up the Severi inequalities for $K_X$ and for $K_F$

$$K_f^n=K_X^n+2nK_F^{n-1}\geq 2n!\,\chi (\omega_X)+4n!\,\chi(\omega
_F)=2n!\,(\chi(\omega_X)-\chi(\omega_F)\chi(\omega_B))=2n!\,\chi_f$$

(ii) If $b\geq 1$ then $\omega_f$ is subcanonical and so $\delta(\omega
_f)=2$. Then apply Proposition \ref{h0slope} and Theorem \ref{CS}.

\end{proof}





 Main Theorem may be an useful tool for classification of irregular varieties. For example, we present here a stronger version of Severi inequality for surfaces with numerically decomposable canonical bundle, which is useful for the classification of irregular surfaces of small invariants (see for example \cite{CMLP} for recent progress).

\begin{proposition} Let $S$ be a smooth surface of maximal Albanese dimension such that $K_S\equiv L_1 + L_2$ with $L_i$ nef line bundles. Then

$$K^2_S \geq 4{\chi}(\omega _S) + 4 h^1_a(L_1)$$
\end{proposition}
\begin{proof} Let $\rho \in {\rm Pic}^{\tau}(S)$ such that $L_1 +(L_2+\rho)=K_S$ and redefine $L_2=L_2+\rho$. Observe that $h^i_a(L_1)=h^{2-i}_a(L_2)$ and both are (numerically) sub-canonical. Applying Theorem (\ref{CS}) to both sheaves and adding up, we obtain

$$L_1^2 +L_2^2 \geq 4 (h^0_a(L_1)+h^0_a(L_2))=4\chi(S,L_1)+4h^1_a(L_1)=2L_1(-L_2)+4\chi(\omega_S)+4h^1_a(L_1)$$

\noindent and hence

$$K^2_S=L_1^2+2L_1L_2+L_2^2 \geq 4 \chi(\omega_S) +4h^1_a(L_1)$$

\end{proof}

\section{Proof of Main Theorem}

As pointed out in the introduction, the proof relies on three basic tools: Xiao's method, the behavior of linear systems on suitable \'etale coverings (studied in Section 3) and finally Pardini's covering trick.

\subsection{Xiao's method}\label{subsectionxiao}

We will use a simplified version of Xiao's method for fibrations in the case where the base curve is $\mathbb{P}^1$, and specially adapted to an ulterior process of \'etale covers. We remind briefly the method in this simplified version and refer to \cite{survey}, \cite{Konno}, \cite{Ohno} and \cite{Xiao} for details. The construction holds in general for $\mathbb{Q}$-Cartier Weil divisors but for simplicity we state it for Cartier divisors, which is the case we will use.

Let $X$ be a normal projective variety of dimension $n$ and let $D$ be a nef Cartier divisor. Let $L={\cal O}_X(D)$ be its associated line bundle. Assume we have a fibration $f: X \longrightarrow \mathbb{P}^1$ with $F$ a general fibre of $f$ and let ${\mathcal E}=f_*L$. It is a vector bundle since it is torsion free on a smooth curve. Consider its decomposition $${\mathcal E}=f_*L=\bigoplus_{i=1}^{l} {\mathcal O}_{\mathbb{P}^1}(a_i)$$

\noindent with $a_1 \geq a_2 \geq ...\geq a_m \geq 0 > a_{m+1} \geq ... \geq a_l$, $\l \geq m\geq 0$, and $l=h^0(F,L_{|F})$. Observe that we have $h^0(X,L)=a_1 +...+a_m+m$ and so

\begin{equation}\label{a1am}
a_1+...+a_m \geq h^0(X,L)-h^0(F,L_{|F}).
\end{equation}



For $i=1,...,m$, define ${\cal E}_i={\cal O}_{\mathbb{P}^1}(a_1)\oplus ...\oplus {\cal O}_{\mathbb{P}^1}(a_i)$. When the $a_i$'s are different, these are the pieces of the Harder-Narashiman filtration of ${\cal E}_m$, of associated slopes $\mu _i = a_i$.

For each $i=1,...,m$ such that $a_i > a_{i+1}$ the composite of the natural sheaf homomorphisms
$$
f^*{\mathcal E}_i(-a_i)\rightarrow f^*(f_*L)(-a_i)\rightarrow L(-a
_iF)
$$
\noindent surjects onto a sheaf of ideals of type ${\mathcal I}_{Z_i}\otimes L(-a_iF)$. Following \cite{Ohno} Lemma 1.1 and Remark therein, up to a suitable desingularization $\epsilon: {\widehat X} \longrightarrow X$, if we set ${\widehat L}=\epsilon ^* L$ and ${\widehat F}=\epsilon ^* F$, we have a decomposition

$${\widehat L}=N_i+{\widehat Z}_i+a_i{\widehat F}$$

\noindent where:
\begin{itemize}
\item $N_i$ is a nef Cartier divisor on ${\widehat X}$ inducing a base point free linear system.
\item ${\widehat Z}_i$ is an effective and fixed Cartier divisor (the base divisor of the induced linear system on ${\widehat X}$).
\end{itemize}


If $a_i=a_{i+1}$ we define $N_i=N_{i+1}$, ${\widehat Z}_i={\widehat Z}_{i+1}$. Observe that $N_1={\mathcal O}_{\widehat X}$ if and only if $a_1>a
_2$. We redefine $a_{m+1}=0$ and extend coherently the definition to $N_{m+1}$ and ${\widehat Z}_{m+1}$.


Moreover, we have that

$$N_1 \leq N_2 \leq ...\leq N_m \leq N_{m+1} \leq L$$
$${\widehat Z}_1 \geq {\widehat Z}_2 \geq ... \geq {\widehat Z}_m \geq {\widehat Z}_{m+1} \geq 0$$
$$a_1 \geq a_2 \geq ... \geq a_m \geq a_{m+1}=0$$

In fact, by construction we have that

$$N_i+({\widehat Z}_i-{\widehat Z}_{m+1})=N_{m+1}(-a_i{\widehat F})$$

\noindent is the decomposition  of $N_{m+1}(-a_i{\widehat F})$ in its moving and fixed part, respectively.



Under these assumptions we can apply Xiao's Lemma (see \cite{Konno}) and \cite{Ohno} Lemma 1.2.
We define the linear systems $P_i:=N_i|_{\widehat F}$
which are free from base points and induce maps $\phi _i: \widehat F \longrightarrow {\mathbb P}^{r_i-1}$. Observe that for $i=1,...,m$ we have $r_i \geq i$.





Define now

$$I_s=\{k=1,...,m \, | \, {\rm dim} \, \phi _k({\widehat F})=s \, \}$$

\noindent and we obtain a partition of the set $\{ \, 1,...,m \, \}$.

Let $r$ be the maximum index such that $I_{r-1}\neq \emptyset$ and define decreasingly, for $s=1,...,r-1$

$$b_s=\{
           \begin{array}{cc}
             {\rm min} I_s & {\rm if} \,\, I_s \neq \emptyset \\
             b_{s+1} & {\rm otherwise} \\
           \end{array}$$

 Then we have that, for any $A_1,...,A_{n-r}$ nef Cartier divisors the following inequality holds:

 $$A_1...A_{n-r}\left[N^r_{m+1}-(\sum _{s=r-1}^{1} (\prod_{k>s}P_{b_k})\sum_{i \in I_s}(\sum_{l=0}^{s}P_i^{s-l}P_{i+1}^{l})(a_i-a_{i+1}))\right]\geq 0.$$

 \noindent In particular, taking $A_1=...=A_{n-r}={\widehat L}$ we obtain

$$L^n=({\widehat L})^n \geq {\widehat L}^{n-r}N_{m+1}^r\geq {\widehat L}^{n-r}\left[\sum _{s=r-1}^{1} (\prod_{k>s}P_{b_k})\sum_{i \in I_s}(\sum_{l=0}^{s}P_i^{s-l}P_{i+1}^{l})(a_i-a_{i+1})\right]$$

Since $P_{i+1} \geq P_i$ and they are nef, we have that $$\sum_{l=0}^{s}(P_i^{s-l}P_{i+1}^{l})\geq (s+1)P_i^s$$

\noindent and so

\begin{equation}\label{xiaobasica}
L^n \geq {\widehat L}^{n-r}\left[\sum _{s=r-1}^{1} (s+1)(\prod_{k>s}P
_{b_k})\sum_{i \in I_s}P_i^s(a_i-a_{i+1})\right]
\end{equation}

For later reference we need to consider the following special case. Assume that all the induced maps $\phi_i$ have image of dimension $n-1$, i.e., they are generically finite. Then





\begin{equation}\label{xiaotruncadagenericamentefinito}
L^n\geq n[P_m^{n-1}(a_m-a_{m+1})+P_{m-1}^{n-1}(a_{m-1}-a_m)+...+P_1^{n-1}(a_1-a_2))]
\end{equation}




\begin{remark}\label{casoespecialxiao} The same construction can be applied to any selection of indexes $I \subseteq \{1,...,m\}$.
\end{remark}

We are going to see now how the method behaves under a suitable \'etale Galois covering of $X$. It turns out that a {\it continuous Xiao's method} holds.

\begin{proposition} Let $X$ be a normal projective variety and $L$ a line budle. Let $a: X \longrightarrow A$ be a nontrivial map to an abelian variety and $f: X \longrightarrow \mathbb{P}^1$  a fibration. Then, for a very general element $\beta \in {\widehat A}$, $L':=L\otimes \beta$ verifies that the vector bundles $\cE_{\alpha}=f_*(L'\otimes \alpha)$ are all equal for $\alpha \in {\widehat A}_{{\rm tors}}$.
\end{proposition}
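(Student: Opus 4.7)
The plan is to combine two standard facts: on $\mathbb{P}^1$ every vector bundle is determined up to isomorphism by its splitting type (Grothendieck), and the torsion subgroup $\widehat{A}_{\mathrm{tors}}$ is countable, so conditions indexed by torsion elements can be intersected while remaining very general.

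First I would assign to each $\gamma \in \widehat{A}$ the splitting type $T(\gamma) = (a_1(\gamma), \ldots, a_l(\gamma))$ of the vector bundle $f_\ast(L \otimes \gamma)$ on $\mathbb{P}^1$. Since $L \otimes \gamma$ is a flat family of sheaves on $X$ parametrized by $\widehat{A}$, and $f$ is a fibration onto the smooth curve $\mathbb{P}^1$, the rank and each quantity
$$h^0(\mathbb{P}^1,\, f_\ast(L\otimes\gamma)(-k)) \;=\; h^0(X,\, L\otimes\gamma \otimes f^\ast\cO_{\mathbb{P}^1}(-k))$$
are upper semicontinuous in $\gamma$. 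The degree $\sum a_i(\gamma) = \chi(L\otimes\gamma)$ is independent of $\gamma$, so only finitely many values of $k$ are relevant; intersecting the finitely many open subsets on which these $h^0$'s achieve their minimum yields a dense open $U \subseteq \widehat{A}$ on which $T(\gamma)$ is constantly equal to a single generic splitting type $T_0$.

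Next I would use translation invariance. Twisting by a fixed $\alpha\in\widehat{A}$ corresponds to the translation $\gamma \mapsto \gamma\otimes\alpha$ of $\widehat{A}$, which is a homeomorphism. Hence $U_\alpha := U\otimes\alpha^{-1}$ is also dense and open, and $\beta \in U_\alpha$ is precisely the condition that $\cE_\alpha = f_\ast((L\otimes\beta)\otimes\alpha) = f_\ast(L\otimes(\beta\otimes\alpha))$ has splitting type $T_0$. Taking the intersection
$$V \;:=\; \bigcap_{\alpha \in \widehat{A}_{\mathrm{tors}}} U_\alpha,$$
indexed by the countable set $\widehat{A}_{\mathrm{tors}}$, produces a very general subset of $\widehat{A}$: its complement is a countable union of proper closed subvarieties. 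For any $\beta \in V$ all the vector bundles $\cE_\alpha$ with $\alpha \in \widehat{A}_{\mathrm{tors}}$ share the same splitting type $T_0$, and are therefore isomorphic on $\mathbb{P}^1$.

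No serious obstruction is expected; the main technical care is in the setup. Specifically, I must verify that $L\otimes\gamma$ defines a flat family over $\widehat{A}$ (interpreting $L$ through its associated reflexive sheaf, and passing to a Cartier multiple $mL$ where appropriate) so that semicontinuity applies in the stated form, and I would also pin down that ``equal'' in the statement should be read as ``isomorphic'', which on $\mathbb{P}^1$ is equivalent to having the same splitting type and therefore matches exactly what the above argument delivers.
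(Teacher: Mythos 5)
Your proposal is correct and follows essentially the same route as the paper: identify a nonempty open set $U\subseteq\widehat A$ on which the bundle $f_*(L\otimes\gamma)$ has constant (generic) splitting type, then use that $\widehat A_{\rm tors}$ is countable to translate $U$ (equivalently, its closed complement) by all torsion elements and still retain a very general set of admissible $\beta$. The only difference is cosmetic: you justify the generic constancy via Grothendieck splitting and semicontinuity of the $h^0(\cE_\gamma(-k))$, where the paper simply asserts it, and you correctly read ``equal'' as ``isomorphic, i.e.\ same splitting type,'' which is all that is used later in Xiao's method.
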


\begin{proof} The continuous family of vector bundles $\{f_*(L\otimes \beta)\}$ on $\mathbb{P}^1$ for $\beta \in {\widehat A}$ must be constant on a nonempty open set $U$. Let $D={\widehat A}\setminus U$. Then

    $$\bigcup_{\alpha \in {\widehat A}_{{\rm tors}}}(\alpha + D)\neq {\widehat A}$$

    \noindent being a countable union of proper closed sets. An element $\beta$ in its complementary set verifies the statement.

\end{proof}

Consider  a fibration $f: X \longrightarrow \mathbb{P}^1$ and a nontrivial map $a:X \longrightarrow A$ such that $a^*$ is injective. Let $L$ be a nef line bundle such that $h^0_a(L) \neq 0$ and which verifies the conclusion of Proposition 5.5, i.e., for all $\alpha \in {\widehat A}_{{\rm tors}}$, the sheaves $f_*(L\otimes\alpha)$ are all equal.
Keeping all the previous notations, for $i=1,...,m$ consider the linear systems $L(-a_iF)$. Apply now Theorem \ref{continuousresolution} to all of them and get

$$\xymatrix {{\widetilde X} \ar[r]^{\lambda}\ar[dr]^{\widetilde f} & X \ar[d]^{f} \\ & \mathbb{P}^1}$$

\noindent where $\lambda$ is  a composition of a blow-up and an \'etale Galois map induced by ${\widehat A}_d$ for certain $d$, verifying the following conditions

\begin{itemize}
\item For $i=1,...,m$ and for any $\alpha \in {\widehat A}$ we have $\lambda^*(L(-a_iF))\otimes \alpha=W_i+N_{\alpha,i}$.
\item $W_i$ does not depend on $\alpha$ and is the fixed component of the linear system.
\item $N_{\alpha,i}$ induce a base point free linear system and for all $\alpha$ we have $N_{\alpha,i}=N_{0,i}\otimes \alpha$.
\end{itemize}

Furthermore, since the kernel of the map ${\rm Pic}^0(X) \longrightarrow {\rm Pic}^0(F)$ is finite, we can choose $d$ in such a way that ${\widetilde F}={\lambda}^*F$ is a connected \'etale cover of $F$ and hence ${\widetilde f}$ is a fibration.

Let ${\widetilde L}=\lambda ^* L$, and $r={\rm deg}\mu$. By the choice of $L$ and projection formula we have

$${\widetilde {\cal E}}={\widetilde f}_*{\widetilde L}={\cal E}^{\oplus r}={\cal O}_{\mathbb{P}^1}(a_1)^{\oplus r}\oplus ...\oplus {\cal O}_{\mathbb{P}^1}(a_l)^{\oplus r}$$

\noindent Applying Xiao's method to ${\widetilde f}$ we have that the vector bundles ${\widetilde {\cal E}}_j$ induce base point free linear systems ${\widetilde N}_j$ on ${\widetilde X}$ and ${\widetilde P}_j$ on ${\widetilde F}$. Among these we have for $i=1,...,m$

$${\widetilde {\cal E}}_{ri}={\cal O}_{\mathbb{P}^1}(a_1)^{\oplus r}\oplus ...\oplus {\cal O}_{\mathbb{P}^1}(a_i)^{\oplus r}$$

\noindent which induce precisely the linear systems ${\widetilde N}_{ri}
=N_{0,i}$ defined above. Then ${\widetilde P}_{ri}=N_{0,i|{\widetilde F}}$ which by construction has a space of sections of dimension at least ${\rm rank}{\widetilde E}_{ri}=ri$. Following the previous conventions recall that if $r(i-1)<j\leq ri$ then ${\widetilde P}_j={\widetilde P}_{ri}$.

Summing up we obtain

\begin{proposition}\label{arreglalotodo}

\begin{itemize}
\item [(i)] $h^0_{\widetilde a}({\widetilde P}_{ri})\geq ri$.
\item [(ii)] If $X$ is of maximal $a$-dimension, we can choose $\lambda$ in such a way that for all $j=1,...,rm$ the linear systems $|{\widetilde P}
_j|$ are generically finite.
\end{itemize}
\end{proposition}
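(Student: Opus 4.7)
The plan is to extract both parts from the Xiao filtration of $\widetilde{\mathcal E}=\widetilde f_*\widetilde L\cong\mathcal E^{\oplus r}$ combined with Theorems 3.2 and 3.5. By the very general choice of $L$ (from the preceding proposition), the splitting type of $\widetilde f_*(\widetilde L\otimes\alpha)$ is constant on an open dense set of torsion $\alpha\in\widehat A$, so for such $\alpha$ we have a canonical rank-$ri$ subbundle $\widetilde{\mathcal E}_{ri,\alpha}\cong\mathcal O(a_1)^{\oplus r}\oplus\cdots\oplus\mathcal O(a_i)^{\oplus r}$.

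For part (i), I would evaluate the twisted subbundle $\widetilde{\mathcal E}_{ri,\alpha}(-a_i)$ at a general point of $\mathbb{P}^1$: since each summand $\mathcal O(a_k-a_i)$ with $k\le i$ is globally generated, the evaluation map is surjective onto the rank-$ri$ fibre, which yields $ri$ linearly independent sections of $(\widetilde L-a_i\widetilde F)\otimes\alpha$ whose restrictions to a general fibre $\widetilde F$ remain linearly independent inside $H^0(\widetilde F,(\widetilde L\otimes\alpha)|_{\widetilde F})$. By Theorem 3.2 applied to $L-a_iF$, every section of $(\widetilde L-a_i\widetilde F)\otimes\alpha$ contains the $\alpha$-independent fixed part $W_i$, so dividing by $W_i$ produces $ri$ linearly independent sections of $\widetilde N_{ri}\otimes\alpha=N_{\alpha,i}$; restriction to $\widetilde F$ stays injective and gives $h^0(\widetilde F,\widetilde P_{ri}\otimes\alpha|_{\widetilde F})\ge ri$ for $\alpha$ in a dense open set, hence $h^0_{\widetilde a}(\widetilde P_{ri})\ge ri$.

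For part (ii), maximal $a$-dimension of $X$ forces $\dim a(F)=n-1$ on a general fibre $F$ (otherwise $\dim a(X)\le(n-2)+1<n$), so $\widetilde F$ is of maximal $\widetilde a|_{\widetilde F}$-dimension into the abelian subvariety $A_F\subseteq A$ generated by $\widetilde a(\widetilde F)$. By part (i), $h^0_{\widetilde a|_{\widetilde F}}(\widetilde P_{ri})\ne 0$ for each $i=1,\dots,m$, so Theorem 3.5(ii) applied to the pair $(\widetilde F,\widetilde P_{ri})$ furnishes a multiplication-by-$d_i$ isogeny of $A_F$ after which the pullback of $|\widetilde P_{ri}|$ is generically finite. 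Setting $D=\mathrm{lcm}(d_1,\dots,d_m)$ and composing $\lambda$ with the cover induced by $[D]\colon A\to A$ (whose restriction to any abelian subvariety is also multiplication by $D$) realises all these covers simultaneously; on the enlarged cover $\widetilde X^\dagger$ the new Xiao moving part $\widetilde N^\dagger_{rr'i}$ contains the pullback of $\widetilde N_{ri}$ as a sublinear system, hence $|\widetilde P^\dagger_{rr'i}|$ contains the pulled-back $|\widetilde P_{ri}|$ and is therefore generically finite. The block structure $\widetilde N_j=\widetilde N_{ri}$ on each slope block $r(i-1)<j\le ri$ then extends the conclusion to every index $j\in\{1,\dots,rm\}$.

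The main hurdle will be verifying that the new Xiao moving systems on $\widetilde X^\dagger$ really dominate the pullbacks of the old moving systems. This should be forced by the isomorphism $\widetilde f^\dagger_*\widetilde L^\dagger\cong\widetilde{\mathcal E}^{\oplus r'}$ (projection formula together with the splitting constancy applied to the enlarged configuration), which guarantees that the new Harder--Narasimhan filtration refines the pullback of the old one; consequently the new rank-$rr'i$ subbundle contributes at least all the pulled-back sections of the old rank-$ri$ moving system, so the new continuous moving part dominates the pullback of the old one and generic finiteness passes down to sublinear systems.
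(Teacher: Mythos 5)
Your proof is correct. Part (i) is essentially the paper's argument, just unpacked: the paper runs the Xiao construction for every twist $\widetilde{\mathcal E}_\alpha=\widetilde f_*(\widetilde L\otimes\alpha)$ (all equal to $\mathcal E^{\oplus r}$ by the very general choice of $L$), invokes the standard Xiao-method fact $r_i\geq i$ --- which is exactly your evaluation-map computation for the rank-$ri$ subbundle --- and then uses $N_{\alpha,i}=N_{0,i}\otimes\alpha$ to turn the bound $h^0(\widetilde F,\widetilde P^{\alpha}_{ri})\geq ri$ into $h^0(\widetilde F,\widetilde P_{ri}\otimes\alpha)\geq ri$, hence the bound on the continuous rank. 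In part (ii) you take a genuinely different and more laborious route. The paper applies Theorem 3.5(ii) to the line bundles $L(-a_iF)$ on $X$ itself, where the maximal $a$-dimension hypothesis lives, so that for $d\gg 0$ the systems $|N_{\alpha,i}|$ induce generically finite maps on the $n$-dimensional $\widetilde X$; generic finiteness of $|\widetilde P_{ri}|$, with $\widetilde P_{ri}=N_{0,i|\widetilde F}$, then follows by restricting a generically finite map to a general member $\widetilde F$ of a moving family. You instead invoke Theorem 3.5(ii) on the $(n-1)$-dimensional fibre, which costs you two extra verifications: (a) that the multiplication cover of $A_F$ is dominated by the restriction of a multiplication cover of $A$ --- true, since $\widehat A_D\to(\widehat{A_F})_D$ is surjective --- and (b) that the Xiao moving parts on the enlarged cover dominate the pullbacks of the old ones. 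Your sketch of (b) is sound: under \'etale pullback the fixed part can only decrease, so $|N^{\dagger}_{0,i}|$ contains the subsystem $\nu^*|N_{0,i}|+(\nu^*W_i-W^{\dagger}_i)$ and the induced map refines $\phi_{N_{0,i}}\circ\nu$; this monotonicity is in any case implicitly needed for the paper's ``modify $\lambda$ with $d\gg 0$'' as well. Both routes work; the paper's is shorter because restricting a generically finite map from the total space avoids re-examining the Xiao data on the fibre after the further cover.
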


\begin{proof}
\noindent (i) We can do the same construction for ${\widetilde {\cal E}}_{\alpha}={\widetilde f}_*({\widetilde L}\otimes \alpha)$. Similarly we obtain linear systems ${\widetilde N}^{\alpha}_{ri}=N_{\alpha,i}$ such that when restricted to ${\widetilde F}$ they induce ${\widetilde P}^{\alpha}_{ri}$ of dimension at least $ri$. By construction  for any $\alpha$ we have that $N_{\alpha,i}=N_{0,i}\otimes \alpha$ and hence the same happens when restricting to ${\widetilde F}$. Hence for all $\alpha$ we have that $h^0({\widetilde F}, {\widetilde P}_{ri}\otimes \alpha)=h^0({\widetilde F},{\widetilde P}^{\alpha}_{ri})\geq ri$ and so $h^0_{\widetilde a}({\widetilde P}_{ri})\geq ri$.

\noindent (ii) We have that ${\widetilde P}_j={\widetilde P}_{ri}$ for some $i$. By Theorem 3.5 we can modify $\lambda$ by a multiplication map with $d>>0$ such that the maps induced by $N_{\alpha,i}$ are all generically finite.

\end{proof}

\subsection{The proof}\label{prueba}

\bigskip
We will use freely the notations of subsection \ref{subsectionxiao}. Observe that if $h^0_a(L)=0$ then the result is trivially true. From now on we will consider that $h^0_a(L)\neq 0$. We can also assume that the map $a^*: {\widehat A} \longrightarrow {\rm Pic}^0X$ is injective, and hence apply freely the results of Section 3 and Subsection 5.1. Indeed, consider the abelian variety $B={\rm Im}(a^*)$ and let $C={\widehat B}$. We have a factorization of $a$

 $$\xymatrix {X \ar[r]^{{\rm alb}_X} & {\rm Alb}_X \ar[r]^{\pi} & C \ar[r] & A } $$

\noindent with $(\pi \circ {\rm alb}_X)^*:{\widehat C}=B \longrightarrow {\rm Pic}^0X$ injective and $h^0_a(L)=h^0_{\pi \circ {\rm alb}_X}(L)$. Since the map $C \longrightarrow A$ is \'etale onto the image of $a$, clearly the $a$-dimension of $X$ coincides with the $(\pi \circ {\rm alb}_X)$-dimension of $X$.

Observe that the injectivity property is stable by restriction to any $M$, a big and nef smooth divisor on $X$, since ${\rm Pic}^0X \cong {\rm Pic}^0M$ if ${\rm dim} \, M\geq 2$ and ${\rm Pic}^0X \subseteq {\rm Pic}^0M$ if ${\rm dim} \, M=1$.

\medskip

\noindent (i) Assume that $X$ is of maximal $a$-dimension. We proceed by induction on $n={\rm dim} \, X$.


\medskip

\noindent
{\underline {\bf Case $n=1$.}} Let $X$ be a smooth curve of genus $g\geq 1$ and $L$ a nef line bundle. If ${\rm deg} L \leq {\rm deg} K_X=2g-2$ and it is non-special, then ${\rm deg} L \geq 2 h^0(X,L)$ by Riemann-Roch theorem. If ${\rm deg} L=r(2g-2)$ with $r \in \mathbb{Q}, r>1$ then again by Riemann-Roch theorem we obtain ${\rm deg}L = \frac{2r}{2r-1}h^0(X,L)$. It remains the case of an special divisor. Take the \'etale cover of $C$ induced by ${\widehat A}_d$, say $\mu: {\widetilde C} \longrightarrow C$. Consider ${\widetilde L}=\mu ^* L$ which is an special line bundle on ${\widetilde C}$. Hence, we can apply Clifford's theorem and obtain

$$d^{2q}{\rm deg}L={\rm deg}{\widetilde L} \geq 2 h^0({\widetilde C},{\widetilde L})-2\geq 2d^{2q}h^0_{a}(L)-2$$

\noindent where $q={\rm dim} \, A$. Since this holds for all $d$ we obtain ${\rm deg}L\geq 2h^0_a(L)$.

\medskip

\noindent
{\underline {\bf Case $n\geq 2$.}} We assume now that for any irregular variety $X'$ of dimension at most $n-1$, with a nontrivial map $a': Y \longrightarrow A'$ to an abelian variety, and for any nef line bundle $L'$, inequality (i) holds. The key argument here is to use this induction hypothesis in Xiao's method on a suitable \'etale cover of $X$ to prove an inequality (Step 1) from which we can apply the main idea of Pardini's proof in \cite{Pardini} (Step 2).

\medskip

{\underline {\it Step 1.}{\it Claim.}} Given a base point free linear system $|M|$ on $X$, with $M^n>0$, we have
$$L^n+nL^{n-1}M\geq \delta(L)\,n!\,h^0_a(L).$$

\noindent {\it Proof of Claim.} The general member of $|M|$ is smooth and irreducible. Take two general smooth members $F, F' \in |{M}|$. Consider a blow up $\epsilon: Y\longrightarrow X$ in order to get a fibration $f: Y \longrightarrow \mathbb{P}^1$ induced by $F$ and $F'$. Since the formula we want to prove is invariant in the algebraic class of $L$, we change $L$ by $L\otimes \beta$ in such a way that Proposition 5.5 applies for $\epsilon^*L$. Moreover we can get that $h^0(X,L)=h^0_a(L)$ and that $h^0(F,L_{|F})=h^0_a(L_{|F})$.

Then we can apply the construction of Subsection 5.1  to $f:Y \longrightarrow \mathbb{P}^1$ and get

$$\xymatrix {A \ar[r]^{\mu} & A \\{\widetilde X} \ar[u]^{{\widetilde a}}\ar[r]^{\mu} & X \ar[u]^{a}\\{\widetilde Y}\ar[u]^{\widetilde {\epsilon}}\ar[r]^{\lambda}\ar[dr]_{{\widetilde f}} &Y\ar[u]^{\epsilon} \ar[d]^{f}& \\&{\mathbb{P}_1}&}$$

\noindent where $\mu$ is of degree $r=d^{2q}$ ($q={\rm dim} \, A$). Let ${\widetilde L}=\lambda^*(\epsilon^*L)$ and ${\widetilde F}=\lambda^*F$, which is an irreducible \'etale cover of $F$ of degree $r$. We can apply Proposition 5.6 (ii) and so all the induced maps are generically finite and hence by (\ref{xiaotruncadagenericamentefinito}) we obtain

$${\widetilde L}^n \geq n[{\widetilde P}_r^{n-1}(a_1-a_2)+...+{\widetilde P}_{rm}^{n-1}a_m].$$

Observe that ${\widetilde F}$ is $(n-1)$-dimensional, of maximal ${\widetilde a}_{|{\widetilde F}}$-dimension and the map ${\widetilde a}_{|{\widetilde F}}$ is non trivial. So we apply the induction hypothesis for the nef line bundles ${\widetilde P}_i$ on ${\widetilde F}$, and for all $i=1,...,m$ we get

$${\widetilde P}_{ri}^{n-1}\geq \delta ({\widetilde P}_{ri})\,(n-1)!\,h^0_{{\widetilde a}}({\widetilde P}_{ri})\geq \delta (L)\,(n-1)!\,h^0_{{\widetilde a}}({\widetilde P}_{ri})$$

\noindent the last inequality holding by Remark \ref{delta2}. By Proposition 5.6 (i) we have that

$$h^0_{{\widetilde a}}({\widetilde P}_{ri})\geq ri$$

Thus

$$rL^n={\widetilde L}^n\geq r\delta (L)\, n!\,(a_1+...+a_m)\geq r\delta (L)\, n!\,(h^0_{a}(L)-h^0_{a}({\epsilon^*L}_{|F}))$$


Using again induction for $\epsilon^*L_{|F}$ we have

$$(\epsilon^*L_{|F})^{n-1}\geq \delta (\epsilon^*L_{|F})\,(n-1)!\,h^0_{a}(\epsilon^*L_{|F})\geq \delta (L)\,(n-1)!\,h^0_{a}(\epsilon^*L_{|F})$$

\noindent and summing up, since $\epsilon_*F=M$ we finally obtain

$$L^n+ n\,L^{n-1}M \geq\delta(L) \,n! \,h^0_a(L).$$

{\underline {\it Step 2.} Let us apply now Pardini's covering trick to prove the statement. Consider again $d\in \mathbb{N}$ and the \'etale Galois map induced by multiplication

$$\xymatrix {A \ar[r]^{\mu} & A\\{\widetilde X} \ar[u]^{{\widetilde a}}\ar[r]^{\mu} & X \ar[u]^{a}}$$

Let $H$ be a fixed very ample line bundle on $A$ and let $M=a^*H$, ${\widetilde M}={\widetilde a}^*(H)$. By \cite{LB} Ch2. Prop. 3.5 we have

$${\mu}^*H \equiv d^2 H$$

\noindent and so
\begin{equation}\label{dcuadrado}
{\widetilde M} \equiv \frac{1}{d^2}{\mu}^*M.
\end{equation}

Define again

$${\widetilde L}:={\mu}^*L$$

We have

\begin{itemize}
\item  $h^0_{\widetilde a}({\widetilde L})=d^{2q}h^0_a(L)$ by Proposition 2.8, and $\delta ({\widetilde L}) = \delta (L)$ by Remark \ref{delta2}.
\item  For all $i=0,...,n$  ${\widetilde L}^{n-i}{\widetilde M}^i=d^{2q-2i}L^{n-i}M^i$ by (\ref{dcuadrado}).
\end{itemize}

We can apply now the Claim of Step 1 to $({\widetilde M},{\widetilde X}, {\widetilde L})$:

$$({\widetilde L})^n+n({\widetilde L})^{n-1}{\widetilde M}\geq \delta({\widetilde L})\, n!\, h^0_a({\widetilde L})$$

And hence:

$$d^{2q}L^n+nd^{2q-2}L^{n-1}M\geq d^{2q}\delta(L)\, n!\, h^0_a(L)$$

\noindent which holds for all $d$. Thus we can conclude

$$L^n \geq \delta (L)\, n!\, h^0_a(L).$$

\medskip

\noindent (ii) Assume now that $1 \leq k={\rm dim} \, a(X) < n$ and that $M
_{|G}$ is big, $G$ being  a general fibre of the algebraic fibre space induced by $a$ and $M$ the continuous moving part of the linear system $|L|$.

Following Theorem \ref{etale} (ii), up to a composition of a blow-up and an \'etale cover, we have $\lambda ^*L=W+N$ with $|N|$ base point free and generically finite, $h^0_a(N)=({\rm deg}\mu)h^0_a(L)$ and $\delta(N) \geq \delta (L)$ (see Remark \ref{delta2}). Since $L$ and $N$ are nef we have $({\rm deg}\mu) L^n=(\lambda ^*L)^n \geq N^n$. Hence its enough to prove the statement for $N$.

Take general elements $N_1,...N_{n-k}\in |N|$ and let $T=N_1\cap...\cap N_{n-k}$. We have that $T$ is smooth, $k$-dimensional and dominates $a(X)$ since $N$ is transversal to a general $G$ (it induces a generically finite map). Hence $T$ is of maximal $a$-dimension. Let $N_T=N_{|T}$. We have $h^0_a(N_T)\geq h^0_a(N)$ (by Proposition 2.8 (iii)) and $\delta(N_T) \geq \delta (N)$ (by Remark \ref{delta2}). Then we apply (i) to the pair $(T,N_T)$ with respect to $a$ and obtain

$$N^n=(N_T)^k \geq \delta(N_T) \, k! \, h^0_a(N_T)\geq \delta(N) \, k! \, h^0_a(N).$$

\medskip

\noindent (iii) As in (ii) up to an \'etale cover and blow-up we obtain $|\lambda^*L|=W+|N|$. But in this case $N=\phi^*R$ where $\phi: {\widetilde X} \longrightarrow Z$ is the algebraic fibre space induced by $|N|$, ${\rm dim} \, Z\geq k$ and the linear system $|R|$ on $Z$ is generically finite. Up to blow-ups on ${\widetilde X}$ and $Z$ we can assume that $Z$ is smooth. $L$ is big so also $\lambda^*L$ is. Contrary to (ii), in this case there is no clear relation between $\delta(R)$ and $\delta(N)\geq \delta(\lambda^*L)\geq\delta(L)$, so we only use $\delta(R)\geq 1$.

Let $r={\rm dim} \, Z$ and let ${\overline G}$ be the fibre of $\phi$. By nefness of $\lambda^*L$ and $N$ and bigness of $(\lambda^*L)_{|G}$ we have

$$L^n=(\lambda^*L)^n\geq (\lambda^*L)^{n-r}\phi^*(R)^r=((\lambda^*L)_{{\overline G}})^{n-r}R^r\geq R^r.$$

\noindent If ${\rm dim} \, Z=k$ we just apply (i) to the pair $(Z,R)$. If ${\rm dim} \, Z>k$, since $|R|$ is base point free on $Z$ and induces a generically finite map, we can apply to it the same argument as in (ii). In any case we obtain

$$R^r\geq \delta(R)\,k!\,h^0_{\widetilde a}(R)\geq k! \,h^0_{a'}(R)=k! \, h^0_a(L).$$

\begin{remark} As a corollary of proof, observe that in the proof of (iii) we can obtain the same inequality as in (ii) provided that $((\lambda^*L)
_{{\overline G}})^{n-r}\geq 2$. In other words, if (ii) does not hold, then ${\widetilde X}$ is fibred by a family of varieties ${\overline G}$ with $((\lambda^*L)_{{\overline G}})^{n-r}=1$.

When $X$ is Gorenstein and minimal, $L=K_X$ and $a=alb_X$, observe that this implies that the ${\widetilde a}$-fibres of ${\widetilde X}$ are fibred by ${\overline G}$ such that $K_{\overline G}^{n-r}=1$. Since the general fibre of the algebraic fibre space induced by ${\widetilde a}$ is isomorphic to the general fibre of the algebraic fibre space induced by $alb_X$ on $X$, then we can conclude that if $K^n_X < 2k!\,\chi(\omega_X)$, then $alb_X$ factors through a fibration with general fibre $G$ such that $K^{{\rm dim}G}_G=1$. Observe that this cannot happen when ${\rm dim} \, alb(X)=n-1$.

\end{remark}

\addcontentsline{toc}{section}{References}

\bigskip
\bigskip
\noindent Miguel \'Angel Barja \\  Departament de Matem\`atica  Aplicada I, ETSEIB-Facultat de Matem\`atiques i Estad{\'\i}stica \\
Universitat Polit\`ecnica de Catalunya-BarcelonaTECH \\ Avda. Diagonal, 647 \\ 08028 Barcelona (Spain).\\
e-mail: \textsl{miguel.angel.barja@upc.edu}


\begin{thebibliography}{99}


\bibitem{BLNP} M.A. Barja, M. Lahoz, J.C. Naranjo, G. Pareschi,  \emph{On the bicanonical map of irregular varieties}, J. Algebraic Geom. {\bf 21 3} (2012), 445-471.

\vspace{-0.1cm}

\bibitem{survey} M.A. Barja, L. Stoppino, \emph{Stability conditions and positivity of invariants of fibrations},  arXiv:1212.4769.

\vspace{-0.1cm}

\bibitem{BS} M. Beltrametti, A. Sommese, \emph{The adjuntion theory of complex projective varieties}, De Grutyer Expositions in Mathematics {\bf 16} (1991).

\vspace{-0.1cm}


\bibitem{Catanese} F. Catanese,  \emph{Moduli of surfaces of general type}, Algebraic Geometry: open problems. Proceedings Ravello 1982, Springer-Verlag L.N.M. {\bf 997} (1983), 90-112.

\vspace{-0.1cm}

\bibitem{CCML} F. Catanese, C. Ciliberto, M. Mendes-Lopes,  \emph{On the classification of irregular surfaces of general type with nonbirational bicanonical map},  Trans. Amer. Math. Soc. {\bf 350 1} (1998), 275-308.

\vspace{-0.1cm}


\bibitem{Ch} M. Chen, \emph{A sharp lower bound for the canonical volume of 3-folds of general type}, Math. Ann. {\bf 337} (2007), 165-181.

\vspace{-0.1cm}


\bibitem{ChCh} J. A. Chen, M. Chen, \emph{The canonical volume of 3-folds of general type with $\chi \leq 0$}, J. Lond. Math. Soc. {\bf 2 78 3}
(2008), 693 - 706.

\vspace{-0.1cm}



\bibitem{CMLP} C. Ciliberto, M. Mendes-Lopes, R. Pardini, \emph{The classification of minimal irregular surfaces of general type with $K^2_S=2p_g$}, arXiv: 1307.6228.

\vspace{-0.1cm}



\bibitem{CH} M. Cornalba, J. Harris, \emph{Divisor classes associated to families of stable varieties, with applications to the moduli space of curves}, Ann. Sc. Ec. Norm. Sup. {\bf 21 4} (1988), 455-475.

\vspace{-0.1cm}



\bibitem{De} O. Debarre, \emph{On coverings of simple abelian varieties}, Bull. Soc. Math. France {\bf 134 2} (2006), 253-260.

\vspace{-0.1cm}

\bibitem{fujita} T. Fujita, \emph{On Kahler fiber spaces over curves}, J. Math. Soc. Japan  {\bf 30 }  (1978),  779-794.

\vspace{-0.1cm}


\bibitem{Haconpardini} C. Hacon, R. Pardini, \emph{Birational classification of curves of genus 2}, Math. Res. Let. {\bf 12 1} (2005),
 129-140.

\vspace{-0.1cm}


\bibitem{Konno} K. Konno, \emph{A lower bound of the slope of trigonal fibrations}, Internat. J. Math. {\bf 7 (1)} (1996), 19-27.

\vspace{-0.1cm}


\bibitem{LB} H. Lange, C. Birkenhake, \emph{Complex Abelian Varieties}, G. m. W. Springer-Verlag {\bf 302} (1992).

\vspace{-0.1cm}

\bibitem{Manetti} M. Manetti,  \emph{Surfaces of Albanese general type and the Severi conjecture}, Math. Nach. {\bf 261-262} (2003), 105-122.

\vspace{-0.1cm}

\bibitem{Mendeslopespardini} M. Mendes-Lopes, R. Pardini, \emph{Severi type inequalities for irregular surfaces with ample canonical class}, Comment. Math. Helv. {\bf 86 2} (2011), 401-414.

\vspace{-0.1cm}

\bibitem{MLPSurvey} M. Mendes-Lopes, R. Pardini, \emph{The geography of irregular surfaces}, Current developements in algebraic geometry, MSRI Publications {\bf 59} (2011), 349-378.

\vspace{-0.1cm}


\bibitem{MLPP2} M. Mendes-Lopes, R. Pardini, G.P. Pirola, \emph{Continuous families of divisors, paracanonical systems and a new inequality for varieties of maximal Albanese dimension}, Geom. Topol. {\bf 17 (2)} (2013), 1205-1223.

\vspace{-0.1cm}

\bibitem{MLPP1} M. Mendes-Lopes, R. Pardini, G.P. Pirola, \emph{Brill-Noether loci for divisors on irregular varieties}, mathAG:1112.6357.

\vspace{-0.1cm}


\bibitem{Miyaoka}  Y. Miyaoka, \emph{The Chern classes and Kodaira dimension of a minimal variety}, Algebraic Geometry, Sendai 1985; Adv. Studies in Pure Math. {\bf 10} (1987), 449-476.

\vspace{-0.1cm}


\bibitem{M1}  D. Mumford, \emph{On the equations defining abelian varieties, I}, Invent. Math. {\bf 4} (1966), 287-354.

\vspace{-0.1cm}


\bibitem{M2}  D. Mumford, \emph{Varieties defined by quadratic equations},  Questions on Algebraic Varieties, "C.I.M.E. Summer Schools"  series {\bf 51} (1969), 29-100.

\vspace{-0.1cm}

\bibitem{Ohno} K. Ohno, \emph{Some inequalities for minimal fibrations of surfaces of general type over curves}, J. Math. Soc. Japan {\bf 44 4} (1992), 643-666.
\vspace{-0.1cm}


\bibitem{Pardini} R. Pardini, \emph{The Severi inequality $K^2_S \geq 4 \chi (\omega_S)$ for surfaces of maximal Albanese dimension}, Invent. Math. {\bf 159 3} (2005), 669-672.

\vspace{-0.1cm}


\bibitem{Pareschi} G. Pareschi, \emph{Basic results on irregular varieties via Fourier-Mukai methods}, Current Developments in Algebraic Geometry, MSRI Publications {\bf 59} (2011), 379-403.

\vspace{-0.1cm}

\bibitem{Pareschipopa2} G. Pareschi, M. Popa, \emph{Regularity on abelian varieites I}, Journal Amer. Math. Soc. {\bf 16 2} (2003), 285-302.

\vspace{-0.1cm}


\bibitem{Pareschipopa} G. Pareschi, M. Popa, \emph{GV sheaves, Fourier-Mukai transforms and Generic Vanishing}, Amer. J. Math. {\bf 133 1} (2011), 235-271.

\vspace{-0.1cm}

\bibitem{Severi} F. Severi, \emph{La serie canonica e la teoria delle serie principali di gruppi di punti supra una superficie algebrica}, Comm. Math. Helv. {\bf 4} (1932), 268-326.
\vspace{-0.1cm}


\bibitem{Xiao} G. Xiao, \emph{Fibred algebraic surfaces with low slope}, Math. Ann. {\bf 276} (1987), 449-466.
\vspace{-0.1cm}

\bibitem{Zhang} L. Zhang, \emph{An orbifold approach to Severi inequality}, mathAG:1202.2656.

\vspace{-0.1cm}

\bibitem{TongZhang} T. Zhang, \emph{Severi inequality for varieites of maximal Albanese dimension}, mathAG:1303.4043.

\end{thebibliography}
\end{document}